\begin{document}

\def\fl#1{\left\lfloor#1\right\rfloor}
\def\cl#1{\left\lceil#1\right\rceil}
\def\ang#1{\left\langle#1\right\rangle}
\def\stf#1#2{\left[#1\atop#2\right]} 
\def\sts#1#2{\left\{#1\atop#2\right\}}
\def\eul#1#2{\left\langle#1\atop#2\right\rangle}
\def\N{\mathbb N}
\def\Z{\mathbb Z}
\def\R{\mathbb R}
\def\C{\mathbb C}

\newtheorem{theorem}{Theorem}
\newtheorem{Prop}{Proposition}
\newtheorem{Cor}{Corollary}
\newtheorem{Lem}{Lemma}
          
\newenvironment{Rem}{\begin{trivlist} \item[\hskip \labelsep{\it
Remark.}]\setlength{\parindent}{0pt}}{\end{trivlist}}

\title{Sylvester sums on the Frobenius set in arithmetic progression
}

\author{
Takao Komatsu
\\
\small Department of Mathematical Sciences, School of Science\\[-0.8ex]
\small Zhejiang Sci-Tech University\\[-0.8ex]
\small Hangzhou 310018 China\\[-0.8ex]
\small \texttt{komatsu@zstu.edu.cn}
}

\date{
\small MR Subject Classifications: Primary 11D07; Secondary 05A15, 05A17, 05A19, 11B68, 11D04, 11P81 
}

\maketitle 
 
\begin{abstract} 
Let $a_1,a_2,\dots,a_k$ be positive integers with $\gcd(a_1,a_2,\dots,a_k)=1$. 
The concept of the weighted sum $\sum_{n\in{\rm NR}}\lambda^{n}$ is introduced in \cite{KZ0,KZ}, where ${\rm NR}={\rm NR}(a_1,a_2,\dots,a_k)$ denotes the set of positive integers nonrepresentable in terms of $a_1,a_2,\dots,a_k$. When $\lambda=1$, such a sum is often called Sylvester sum. The main purpose of this paper is to give explicit expressions of the Sylvester sum ($\lambda=1$) and the weighed sum ($\lambda\ne 1$), where $a_1,a_2,\dots,a_k$ forms arithmetic progressions. As applications, various other cases are also considered, including weighted sums, almost arithmetic sequences, arithmetic sequences with an additional term, and geometric-like sequences. Several examples illustrate and confirm our results.   
\\
{\bf Keywords:} Frobenius problem, weighted sums, Sylvester sums, arithmetic sequences      
\end{abstract}

\section{Introduction}  

Given positive integers $a_1,\dots,a_k$ with $\gcd(a_1,\dots,a_k)=1$, it is well-known that all sufficiently large $n$ can be represented as a nonnegative integer combination of $a_1,\dots,a_k$. 
The {\it Frobenius Problem} is to determine the largest positive integer that is NOT representable as a nonnegative integer combination of given positive integers that are coprime (see \cite{ra05} for general references). This number is denoted by $g(a_1,\dots,a_k)$ and often called Frobenius number. The problem to determine the Frobenius number has been often known as {\it Coin Exchange Problem} (or Postage Stamp Problem / Chicken McNugget Problem)  having a long history and is one of the problems that has attracted many people as well as experts. 

Let $n(a_1,\dots,a_k)$ be the number of positive integers with no nonnegative integer representation by $a_1,\dots,a_k$. It is sometimes called Sylvester number.  

According to Sylvester, for positive integers $a$ and $b$ with $\gcd(a,b)=1$,  
\begin{align*}
g(a,b)&=(a-1)(b-1)-1\quad{\rm \cite{sy1884}}\,,\\
n(a,b)&=\frac{1}{2}(a-1)(b-1)\quad{\rm \cite{sy1882}}\,. 
\end{align*}

There are many kinds of problems related to the Frobenius problem. The problems for the number of solutions (e.g., \cite{tr00}), and the sum of integer powers of values the gaps in numerical semigroups (e.g., \cite{bs93,fks,fr07}) are popular. In \cite{moree14}, the various results within the cyclotomic polynomial and numerical semigroup communities are better unified.  
One of other famous problems is about the so-called {\it Sylvester sums} 
$$
s(a_1,\dots,a_k):=\sum_{n\in{\rm NR}(a_1,\dots,a_k)}n 
$$ 
(see, e.g., \cite[\S 5.5]{ra05}, \cite{tu06} and references therein), where ${\rm NR}(a_1,\dots,a_k)$ denotes the set of positive integers without nonnegative integer representation by $a_1,\dots,a_k$. In addition, denote the set of positive integers with nonnegative integer representation by $a_1,\dots,a_k$ by ${\rm R}(a_1,\dots,a_k)$. 
For example, 
\begin{align*}
{\rm R}(4,7,11)&=\{0,4,7,8,11,12,14,15,16,18,19,20,21,\dots\}\quad{\rm (infinite)}\,,\\
{\rm NR}(4,7,11)&=\{1,2,3,5,6,9,10,13,17\}\quad{\rm (finite)}\,,
\end{align*} 
so $g(4,7,11)=17$.  
Brown and Shiue \cite{bs93} found the exact value for positive integers $a$ and $b$ with $\gcd(a,b)=1$,  
\begin{equation}
s(a,b)=\frac{1}{12}(a-1)(b-1)(2 a b-a-b-1)\,. 
\label{brown}
\end{equation} 
R\o dseth \cite{ro94} generalized Brown and Shiue's result by giving a closed form for $
s_\mu(a,b):=\sum_{n\in{\rm NR}(a,b)}n^\mu$,  
where $\mu$ is a positive integer.  

When $k=2$, there exist beautiful closed forms for Frobenius numbers, Sylvester numbers and Sylvester sums, but 
when $k\ge 3$, exact determination of these numbers is difficult.  
The Frobenius number cannot be given by closed formulas of a certain type (Curtis (1990) \cite{cu90}), the problem to determine $g(a_1,\dots,a_k)$ is NP-hard under Turing reduction (see, e.g., Ram\'irez Alfons\'in \cite{ra05}). Nevertheless, the Frobenius number for some special cases are calculated (e.g., \cite{op08,ro56,se77}). One convenient formula is by Johnson \cite{jo60}. One analytic approach to the Frobenius number can be seen in \cite{bgk01,ko03}.  
We consider a kind of generalizations called {\it weighted sum} for sum of numbers, which can be applied for the case of three or more variables. Notice that the case of two variables is given in \cite{KZ0}.    

Though closed forms for general case are hopeless for $k\ge 3$, several formulae for Frobenius numbers, Sylvester numbers and Sylvester sums have been considered under special cases.  For example, some formulas for the Frobenius number in three variables can be seen in \cite{tr17}. 

In fact, by introducing the other numbers, it is possible to determine the functions $g(A)$, $n(A)$ and $s(A)$ for the set of positive integers $A:=\{a_1,a_2,\dots,a_k\}$ with $\gcd(a_1,a_2,\dots,a_k)=1$. 

For each integer $i$ with $1\le i\le a_1-1$, there exists a least positive integer $m_i\equiv i\pmod{a_1}$ with $m_i\in{\rm R}(a_1,a_2,\dots,a_k)$. For convenience, we set $m_0=0$.  With the aid of such a congruence consideration modulo $a_1$, very useful results are established.  

\begin{Lem}  
We have 
\begin{align*}
g(a_1,a_2,\dots,a_k)&=\left(\max_{1\le i\le a_1-1}m_i\right)-a_1\,,\quad{\rm \cite{bs62}}\\ 
n(a_1,a_2,\dots,a_k)&=\frac{1}{a_1}\sum_{i=1}^{a_1-1}m_i-\frac{a_1-1}{2}\,,\quad{\rm \cite{se77}}\\ 
s(a_1,a_2,\dots,a_k)&=\frac{1}{2 a_1}\sum_{i=1}^{a_1-1}m_i^2-\frac{1}{2}\sum_{i=1}^{a_1-1}m_i+\frac{a_1^2-1}{12}\,.\quad{\rm \cite{tr08}}
\end{align*}
\label{lem1} 
\end{Lem}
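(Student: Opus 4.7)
The plan is to exploit the Apéry-type decomposition of the nonnegative integers according to their residue modulo $a_1$. The crucial observation is that for each $i$ with $0\le i\le a_1-1$, the representable integers congruent to $i\pmod{a_1}$ form the arithmetic progression $\{m_i,\,m_i+a_1,\,m_i+2a_1,\dots\}$. Indeed, if $n\equiv i\pmod{a_1}$ is representable, then $n\ge m_i$ by minimality of $m_i$; conversely, if $n=m_i+ja_1$ with $j\ge 0$, then adjoining $j$ copies of $a_1$ to a representation of $m_i$ yields one for $n$. I would record this as a small preliminary claim.

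As an immediate consequence, the nonrepresentable positive integers in the residue class $i$ (with $1\le i\le a_1-1$) are precisely $i,\,i+a_1,\,i+2a_1,\dots,m_i-a_1$, a set of cardinality $(m_i-i)/a_1$. The residue class $i=0$ contributes nothing since every positive multiple of $a_1$ is trivially representable. With this partition in hand, each of the three formulas follows by a direct residue-by-residue computation.

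For the Frobenius number, the largest nonrepresentable integer in class $i$ is $m_i-a_1$, so maximizing over $i$ yields $g=\max_{1\le i\le a_1-1}m_i-a_1$. For the Sylvester number, summing the cardinalities gives
\[
n=\sum_{i=1}^{a_1-1}\frac{m_i-i}{a_1}=\frac{1}{a_1}\sum_{i=1}^{a_1-1}m_i-\frac{1}{a_1}\cdot\frac{a_1(a_1-1)}{2},
\]
which simplifies to the stated expression. For the Sylvester sum, applying the arithmetic progression formula in each residue class, the contribution of class $i$ is
\[
\sum_{j=0}^{(m_i-i)/a_1-1}(i+ja_1)=\frac{(m_i-i)(m_i+i-a_1)}{2a_1}=\frac{m_i^2-i^2}{2a_1}-\frac{m_i-i}{2}.
\]
Summing over $1\le i\le a_1-1$ and using the closed forms $\sum i=a_1(a_1-1)/2$ and $\sum i^2=a_1(a_1-1)(2a_1-1)/6$, the $i$-dependent residual collapses to $(a_1^2-1)/12$, yielding the formula for $s$.

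There is no serious obstacle; the only step requiring care is the bookkeeping in the Sylvester sum, where the cross terms in $i$ and $i^2$ must be combined correctly. Everything else is a direct application of the Apéry decomposition established in the opening claim.
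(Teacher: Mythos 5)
Your proof is correct, and the computations (including the bookkeeping in the Sylvester sum, where the residual indeed collapses to $(a_1^2-1)/12$) all check out. The paper itself gives no proof of this lemma --- it simply cites Brauer--Shockley, Selmer, and Tripathi --- but the Ap\'ery-type residue-class decomposition you use is precisely the standard argument behind those citations and is the same congruence-modulo-$a_1$ framework the paper relies on throughout, so your route matches the intended one.
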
 
Note that the third formula appeared with a typo in \cite{tr08}, and it has been corrected in \cite{pu18,tr17b}. 
\bigskip 

In this paper, we treat with more general sums called {\it weighted sums}, defined by   
$$
s^{(\lambda)}(a_1,a_2,\dots,a_k):=\sum_{n\in{\rm NR}(a_1,a_2,\dots,a_k)}\lambda^n n\,. 
$$ 
This may be called {\it Sylvester weighted sums}.   
When $\lambda=1$, $s(a_1,a_2,\dots,a_k)=s^{(1)}(a_1,a_2,\dots,a_k)$ is the usual sum, though the obtained formulas are not included in the case of $\lambda\ne 1$.  When $\lambda\ne 1$, $s^{(-1)}(a_1,a_2,\dots,a_k)$ is the so-called alternate sum, which has been studied in \cite{tu06,wang08}.  

When the number of variables is two, similarly to the case of Frobenius number, Sylvester number and Sylvester sums, the results for weighted sums may be explicitly given. However, the results become complicated when the number of variables is bigger than or equal to three.  Nevertheless, if the sequence $a_1,a_2,\dots,a_k$ has some good regularities, the results are possible to be expressed explicitly.    
In this paper, we consider the weighted sum and (simple) sum of nonrepresentable numbers where $a_1=a$, $a_2=a+d$, $\dots$, $a_k=a+(k-1)d$ with $d>0$, $\gcd(a,d)=1$ and $k\le a_1$.  Some more varieties case are also given, including almost arithmetic sequences, arithmetic sequences with an additional term, and geometric-like sequences.

\section{Simple sum}
 
Let us begin from the simple sums.  

Let $a$, $d$ and $k$ be positive integers with $\gcd(a,d)=1$ and $2\le k\le a$. 
Roberts \cite{ro56} found the Frobenius number for arithmetic sequences. 
$$
g(a,a+d,\dots,a+(k-1)d)=\fl{\frac{a-2}{k-1}}a+(a-1)d\,.
$$ 
The case $d=1$ had been found by Brauer \cite{br42}.  
Selmer \cite{se77} generalized Roberts' result by giving a formula for almost arithmetic sequences. For a positive integer $h$, 
$$
g(a,h a+d,\dots,h a+(k-1)d)=\left(h\fl{\frac{a-2}{k-1}}+h-1\right)a+(a-1)d\,.
$$ 

Let $a-1=q(k-1)+r$ with $0\le r<k-1$. Grant \cite{gr73} obtained a formula for the number of positive integers with no nonnegative integer representation by arithmetic sequences.  
$$
n(a,a+d,\dots,a+(k-1)d)=\frac{1}{2}\bigl((a-1)(q+d)+r(q+1)\bigr)\,.
$$ 
Selmer \cite{se77} generalized Grant's result by giving a formula for almost arithmetic sequences. For a positive integer $h$, 
$$
n(a,h a+d,\dots,h a+(k-1)d)=\frac{1}{2}\bigl((a-1)(h q+d+h-1)+h r(q+1)\bigr)\,.
$$ 
Note that $q>0$ because $k\le a$. 
The sum of nonrepresentable numbers in arithmetic progression are given explicitly as follows.

\begin{theorem}  
Let $a$, $d$ and $k$ be positive integers with $\gcd(a,d)=1$ and $2\le k\le a$. Let $q$ and $r$ be nonnegative integers with $a-1=q(k-1)+r$ and $0\le r<k-1$. Then, 
\begin{align*}
&s(a,a+d,\dots,a+(k-1)d)\\
&=\frac{1}{12 q}\biggl(2 a q^3(a+2 r-1)+q^2\bigl(a d(4 a+4 r-5)-d(2 r-1)(r+1)+6 a r\bigr)\\
&\quad -q\bigl(3 d r(r+1)-(a-1)((a-1)(d^2-1)+a d^2)-2 a r(3 d+1)\bigr)\\
&\quad -d(a-r-1)^2\biggr)\,.
\end{align*}
\label{th1}
\end{theorem}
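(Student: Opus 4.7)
The plan is to apply Tripathi's formula from Lemma~\ref{lem1} with $a_1 = a$, so that the task reduces to computing the two sums $\sum_{i=1}^{a-1} m_i$ and $\sum_{i=1}^{a-1} m_i^2$ in closed form.

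The first step is to identify each $m_i$ explicitly. Any representable number has the shape
$$N = a\sum_{\ell=0}^{k-1} x_\ell + d\sum_{\ell=0}^{k-1} \ell\, x_\ell \qquad (x_\ell \ge 0),$$
so $N \equiv d\sum_\ell \ell\, x_\ell \pmod a$. Since $\gcd(a,d)=1$, as $j$ runs over $\{0,1,\dots,a-1\}$ the residues $jd \bmod a$ exhaust all classes. For a given $j$ with $1\le j\le a-1$, among sums $S = \sum_\ell \ell\, x_\ell$ in the residue class of $jd$ the choice $S=j$ is optimal, since replacing $S$ by $S+a$ increases $N$ by at least $a(1+d)$ (here one uses $k \le a$, i.e.\ $a/(k-1)>1$, to control the change in the ceiling). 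Among decompositions $j = \sum_\ell \ell\, x_\ell$ with $\ell \le k-1$, the minimum number of summands is $\lceil j/(k-1)\rceil$, attained by taking $\lfloor j/(k-1)\rfloor$ copies of $k-1$ together with a single copy of $j \bmod (k-1)$. Hence
$$m_{jd \bmod a} = a\left\lceil \frac{j}{k-1}\right\rceil + dj \qquad (1 \le j \le a-1).$$

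Next I would carry out the summation. Using $a-1 = q(k-1)+r$, partition $\{1,\dots,a-1\}$ into the $q$ full blocks $B_n := \{(n-1)(k-1)+1,\dots,n(k-1)\}$ for $n=1,\dots,q$, on each of which $\lceil j/(k-1)\rceil = n$, together with the $r$-tail $B_{q+1} := \{q(k-1)+1,\dots,q(k-1)+r\}$ on which the ceiling equals $q+1$. Summing $m_i = an + dj$ and $m_i^2 = a^2 n^2 + 2ad\, nj + d^2 j^2$ block by block reduces to the elementary power sums $\sum n$, $\sum n^2$ and arithmetic series in $j$, which yields polynomial expressions in $a, d, q, r, k$ for $\sum_{i=1}^{a-1} m_i$ and $\sum_{i=1}^{a-1} m_i^2$. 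Substituting into Tripathi's formula and eliminating $k$ via $k-1 = (a-1-r)/q$ puts everything over the common denominator $12q$.

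The main obstacle is purely the bookkeeping of this final simplification: the cross term $2ad\sum nj$ produces mixed contributions that only collapse once the bulk $q$-block sum has been correctly merged with the truncated $r$-tail, and it is easy to drop a factor of $q$ or $r$ in the process. To guard against this I would validate the final expression by checking the degenerate boundary $r=0$ (where the tail vanishes and $k-1 \mid a-1$), the case $d=1$ against the classical counting results of Brauer and Grant quoted above, and the small example $s(4,7,10) = 1+2+3+5+6+9+13 = 39$ (for which $a=4$, $d=3$, $k=3$, $q=r=1$) as a numerical sanity check before declaring the identity.
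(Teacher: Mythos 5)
Your proposal is correct and follows essentially the same route as the paper: the closed form $m_{jd \bmod a}=a\lceil j/(k-1)\rceil+dj$ is exactly Selmer's minimal residue system (which the paper cites rather than re-derives), and the block-plus-$r$-tail summation feeding into Tripathi's formula from Lemma~\ref{lem1} is precisely the computation carried out in Lemma~\ref{lem5}. The only difference is that you supply a short justification of the minimality of each $m_i$ where the paper defers to \cite{se77}.
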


Substituting (\ref{lem5-1}) and (\ref{lem5-2}) in Lemma \ref{lem5} below into the third formula in Lemma \ref{lem1}, we can get Theorem \ref{th1}.   

\begin{Lem}  
When $a_1=a$, $a_2=a+d$, $\dots$, $a_k=a+(k-1)d$ with $d>0$, $\gcd(a,d)=1$ and $k\le a_1$, we have 
\begin{align}
&\sum_{i=1}^{a-1}m_i=\frac{a}{2}\bigl((a-1)(q+d+1)+r(q+1)\bigr)\,,
\label{lem5-1}\\ 
&\sum_{i=1}^{a-1}m_i^2=\frac{(q+1)\bigl((2 q+1)(a-r-1)+6 r(q+1)\bigr)}{6}a^2
+\frac{(a-1)a(2 a-1)}{6}d^2\notag\\
&\quad+2 a d(q+1)\biggl(\frac{(a-r-1)\bigl(q(4 a-4 r-1)-(a-r-1)\bigr)}{12 q}\notag\\
&\qquad +\frac{r(2 a-r-1)}{2}\biggr)\,.
\label{lem5-2}
\end{align}
\label{lem5}
\end{Lem}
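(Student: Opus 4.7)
The plan is to identify each $m_i$ explicitly, reindex the two sums by $s$ rather than $i$, and evaluate them by standard power-sum identities after a block decomposition.

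First I would show that a nonnegative integer $n$ is representable by $a,a+d,\ldots,a+(k-1)d$ precisely when $n=Na+ds$ for some multiplicities $x_0,\ldots,x_{k-1}\ge 0$ with $N=\sum_j x_j$ and $s=\sum_j jx_j$. For fixed $s\ge 1$ the least admissible $N$ is $\cl{s/(k-1)}$, realized by placing as much weight as possible on $j=k-1$ and, if $(k-1)\nmid s$, a single extra summand on the residue position. Since $\gcd(a,d)=1$, the map $s\mapsto ds\bmod a$ is a bijection on $\{0,1,\ldots,a-1\}$, and replacing $s$ by $s+a$ strictly increases $Na+ds$ (both terms grow). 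Hence, for $1\le i\le a-1$, letting $s\in\{1,\ldots,a-1\}$ be the unique integer with $ds\equiv i\pmod a$, one obtains
\[
m_i=\cl{s/(k-1)}\,a+ds.
\]

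Writing $N(s)=\cl{s/(k-1)}$, the sums become
\begin{align*}
\sum_{i=1}^{a-1}m_i &= a\sum_{s=1}^{a-1}N(s)+d\sum_{s=1}^{a-1}s,\\
\sum_{i=1}^{a-1}m_i^2 &= a^2\sum_{s=1}^{a-1}N(s)^2+2ad\sum_{s=1}^{a-1}sN(s)+d^2\sum_{s=1}^{a-1}s^2.
\end{align*}
Since $a-1=q(k-1)+r$, I would partition $\{1,\ldots,a-1\}$ into $q$ consecutive blocks of length $k-1$ on which $N(s)$ equals $1,2,\ldots,q$, followed by a final partial block of length $r$ on which $N(s)=q+1$. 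On each block $N(s)$ is constant, so every required sum collapses into combinations of $\sum_{j=1}^{q} j$, $\sum_{j=1}^{q} j^2$ and arithmetic-progression sums of $s$, all in closed form. Finally, eliminating $k$ via the identity $q(k-1)=a-1-r$ expresses everything in $a,d,q,r$.

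The hardest part will be the bookkeeping in the evaluation of $\sum sN(s)$: after expanding the block sums one must reorganize terms via identities such as $(4q-1)(a-1-r)+3q=q(4a-4r-1)-(a-r-1)$ to match the precise shape of (\ref{lem5-2}). No step uses anything beyond elementary power-sum identities, so the work is routine but requires care.
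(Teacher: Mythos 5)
Your proposal is correct and follows essentially the same route as the paper: your identity $m_i=\cl{s/(k-1)}a+ds$ (with $ds\equiv i\pmod a$) is exactly the content of Selmer's explicit minimal residue system, which the paper cites as its array (\ref{minres}) and then sums row by row — your $q$ full blocks of length $k-1$ plus a partial block of length $r$ are precisely those rows. The only difference is that you derive the description of $m_i$ from first principles rather than quoting Selmer, which makes the argument more self-contained but does not change the computation.
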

\begin{proof}  
Since the minimal residue system $\{m_i\}$ ($1\le i\le a_1-1$) is given by 
\begin{align}
&a_2&\,&a_3&\,&\dots\dots&&a_{k-1}&\,&a_k\notag\\
&a_2+a_k&\,&a_3+a_k&\,&\dots\dots&&a_{k-1}+a_k&\,&2 a_k\notag\\
&\dots&\,&\dots&\,&\dots\dots&&\dots&\,&\dots\notag\\
&a_2+(q-1)a_k&\,&a_3+(q-1)a_k&\,&\dots\dots&&a_{k-1}+(q-1)a_k&\,&q a_k\notag\\
&a_2+q a_k&\,&a_3+q a_k&\,&\dots&&\!\!\!\!\!\!\!\!\!\!\!\! a_{r+1}+q a_k&&\,&
\label{minres}
\end{align}
(\cite[(3.8)]{se77}), the summation of all the elements is  
\begin{align} 
\sum_{i=1}^{a-1}m_i&=\bigl(1+2+\cdots+(q(k-1)+r)\bigr)d\notag\\
&\quad +\bigl((k-1)(1+2+\cdots+q)+r(q+1)\bigr)a\notag\\
&=\frac{(q+1)\bigl(q(k-1)+2 r\bigr)}{2}a+\frac{\bigl(q(k-1)+r\bigr)\bigl(q(k-1)+r+1\bigr)}{2}d\,. 
\label{eq:303}
\end{align}
Since $q(k-1)+r=a-1$, we have  
\begin{align*} 
\sum_{i=1}^{a-1}m_i&=\frac{(q+1)(a-1+r)}{2}a+\frac{(a-1)a}{2}d\\
&=\frac{a}{2}\bigl((a-1)(q+d+1)+r(q+1)\bigr)\,. 
\end{align*}
Similarly, in order to obtain (\ref{lem5-2}), we sum up all the elements 
\begin{align*}
&a_2^2&\,&a_3^2&\,&\dots\dots&&a_{k-1}^2&\,&a_k^2\\
&(a_2+a_k)^2&\,&(a_3+a_k)^2&\,&\dots\dots&&(a_{k-1}+a_k)^2&\,&(2 a_k)^2\\
&\dots&\,&\dots&\,&\dots\dots&&\dots&\,&\dots\\
&\bigl(a_2+(q-1)a_k\bigr)^2&\,&\bigl(a_3+(q-1)a_k\bigr)^2&\,&\dots\dots&&\bigl(a_{k-1}+(q-1)a_k\bigr)^2&\,&(q a_k)^2\\
&(a_2+q a_k)^2&\,&(a_3+q a_k)^2&\,&\dots&&\!\!\!\!\!\!\!\!\!\!\!\!(a_{r+1}+q a_k)^2&&\,&
\end{align*}
Then, we have 
\begin{align} 
\sum_{i=1}^{a-1}m_i^2&=\bigl(1^2+2^2+\cdots+(q(k-1)+r)^2\bigr)d^2\notag\\
&\quad +\bigl((k-1)(1^2+2^2+\cdots+q^2)+r(q+1)^2\bigr)a^2\notag\\
&\quad +2 a d\biggl(1+2+\cdots+(k-1)+2\bigl(k+(k+1)+\cdots+(2 k-2)\bigr)\notag\\
&\qquad +3\bigl((2 k-1)+(2 k)+\cdots+(3 k-3)\bigr)+\cdots\\
&\qquad +q\bigl(((q-1)k-q+2)+\cdots+(q k-q)\bigr)\notag\\
&\qquad +(q+1)\bigl((q k-q+1)+\cdots+(q k-q+r)\bigr)\biggr)\notag\\
&=\frac{\bigl(q(k-1)+r\bigr)\bigl(q(k-1)+r+1\bigr)\bigl(2 q(k-1)+2 r+1\bigr)}{6}d^2\notag\\
&\quad +\left(\frac{(k-1)q(q+1)(2 q+1)}{6}+r(q+1)^2\right)a^2\notag\\
&\quad +2 a d\biggl(\frac{k(k-1)q(q+1)}{4}+\frac{(k-1)^2(q-1)q(q+1)}{3}\notag\\
&\qquad +(q+1)\biggl((q k-q)r+\frac{r(r+1)}{2}\biggr)\biggr)\,. 
\label{eq:304}
\end{align}
Since $q(k-1)+r=a-1$, we have  
\begin{align*} 
\sum_{i=1}^{a-1}m_i^2&=\frac{(q+1)\bigl((2 q+1)(a-r-1)+6 r(q+1)\bigr)}{6}a^2
+\frac{(a-1)a(2 a-1)}{6}d^2\\
&\quad+2 a d(q+1)\biggl(\frac{(a-r-1)\bigl(q(4 a-4 r-1)-(a-r-1)\bigr)}{12 q}\\
&\qquad +\frac{r(2 a-r-1)}{2}\biggr)\,. 
\end{align*}
\end{proof}

\subsection{Examples}  

When $a=7$, $r=2$ and $k=3$, we get $q=3$ and $r=0$. Hence, by Theorem \ref{th1}, we have $s(7,9,11,13)=165$. In fact, the sum of nonrepresentable numbers is 
$$
1 + 2 + 3 + 4 + 5 + 6 + 8 + 10 + 12 + 13 + 15 + 17 + 19 + 24 + 26=165\,. 
$$
When $a=7$, $r=3$ and $k=3$, we get $q=3$ and $r=0$. Hence, $s(7,10,13,16)=237$. The sum of nonrepresentable numbers is 
$$
1 + 2 + 3 + 4 + 5 + 6 + 8 + 9 + 11 + 12 + 15 + 16 + 18 + 19 + 22 + 25 + 29 + 32=237\,. 
$$
When $a=6$, $r=5$ and $k=4$, we get $q=1$ and $r=2$. Hence, $s(6,11,16,21)=212$. The sum of nonrepresentable numbers is 
$$
1 + 2 + 3 + 4 + 5 + 7 + 8 + 9 + 10 + 13 + 14 + 15 + 19 + 20 + 25 + 26 + 31=212\,.
$$

\subsection{Almost arithmetic progressions}   

Theorem \ref{th1} can be extended to that of almost arithmetic progressions.   Nonnegative integers $q$ and $r$ are similarly determined by $a-1=q(k-1)+r$ with $0\le r<k-1$.  

\begin{theorem}  
For $a,d,h>0$ with $\gcd(a,d)=1$ and $k\le a$, we have 
\begin{align*}
&s(a,h a+d,\dots,h a+(k-1)d)\\
&=\frac{1}{12 q}\biggl(2 h^2 a q^3(a+2 r-1)\\
&\quad +q^2\bigl(3 h^2 a(a+3 r-1)+h(a d(4 a+4 r-5)-d(2 r-1)(r+1)-3 a(a+r-1)\bigr)\\
&\quad +q\bigl(h^2 a(a+5 r-1)+(a-1)(d-1)(2 a d-a-d-1)\\
&\qquad +3 h((a+r-1)(d(a+r)-a)-2 d r^2)\bigr)\\
&\quad -h d(a-r-1)^2\biggr)\,.
\end{align*}
\label{th1a}  
\end{theorem}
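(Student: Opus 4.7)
The plan is to parallel the derivation of Theorem \ref{th1}, tracking how the extra parameter $h$ enters the minimal residue system. By Selmer's construction \cite{se77}, the minimal residue system $\{m_i\}$ modulo $a$ for the almost arithmetic sequence $a,ha+d,\dots,ha+(k-1)d$ is given by the same tableau as (\ref{minres}), except that each $a_j=a+(j-1)d$ is replaced by $a_j=ha+(j-1)d$ (for $j=2,\dots,k$); the anchor $a_k=ha+(k-1)d$ still drives the successive rows. Every entry therefore has the form
$$
a_j+\ell a_k=(\ell+1)\,h a+\bigl(j-1+\ell(k-1)\bigr)\,d,
$$
so if we write the residue in the arithmetic case as $m_i=\alpha_i a+\beta_i d$, then the corresponding residue in the almost arithmetic case is $m_i^{\rm alm}=\alpha_i\,ha+\beta_i\,d$.

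Because the identities of Lemma \ref{lem5} are homogeneous in $a$ and $d$, this observation lets us read off the analogues of (\ref{lem5-1}) and (\ref{lem5-2}) without redoing the sums. Splitting (\ref{lem5-1}) as $\frac{a(q+1)(a+r-1)}{2}+\frac{a(a-1)d}{2}$, the $a$-part picks up a factor $h$, yielding
$$
\sum_{i=1}^{a-1}m_i^{\rm alm}=\frac{a}{2}\bigl((a-1)(hq+h+d)+hr(q+1)\bigr),
$$
which is consistent with Selmer's formula for $n(a,ha+d,\dots,ha+(k-1)d)$ recorded earlier in this section. Similarly the three homogeneous summands of (\ref{lem5-2}) acquire factors $h^2$, $h$, $1$ respectively, giving a closed form for $\sum_{i=1}^{a-1}(m_i^{\rm alm})^2$ as a sum $h^2\cdot(a^2\text{-part})+h\cdot(ad\text{-part})+(d^2\text{-part})$.

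The remaining work is to substitute these two sums into the third identity of Lemma \ref{lem1},
$$
s=\frac{1}{2a}\sum_{i=1}^{a-1}m_i^2-\frac{1}{2}\sum_{i=1}^{a-1}m_i+\frac{a^2-1}{12},
$$
multiply through by $12q$, expand, and collect powers of $q$ in order to match the form printed in Theorem \ref{th1a}. The main obstacle is this last bookkeeping step: one must carefully separate the $h^0$, $h^1$ and $h^2$ contributions and regroup them according to the $q^0,q^1,q^2,q^3$ structure of the target expression. A useful consistency check during the expansion is that setting $h=1$ must collapse the result back to Theorem \ref{th1}, and the constant-in-$q$ term arises solely from the $\frac{(a-r-1)(-(a-r-1))}{12q}$ fragment of the $ad$-part in (\ref{lem5-2}), now weighted by $h$, which explains the tail $-hd(a-r-1)^2$ in the stated formula.
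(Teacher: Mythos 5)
Your proposal is correct and follows essentially the same route as the paper: the paper also obtains Theorem \ref{th1a} by noting that the minimal residue system keeps the tableau structure of (\ref{minres}) with $a_j=ha+(j-1)d$, so that the $a^2$-, $ad$-, and $d^2$-homogeneous parts of (\ref{lem5-1}) and (\ref{lem5-2}) acquire factors $h^2$, $h$, $1$ (this is exactly Lemma \ref{lem5a}), and then substitutes into the third formula of Lemma \ref{lem1}. Your stated analogues of the two sums agree with Lemma \ref{lem5a}, and your consistency checks (specializing $h=1$, and tracing the $q^0$ tail to the $1/(12q)$ fragment) are sound.
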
  

This result is similarly proved from the following sums, which are analogous ones of those in  Lemma \ref{lem5}.   

\begin{Lem}  
When $a_1=a$, $a_2=h a+d$, $\dots$, $a_k=h a+(k-1)d$ with $d,h>0$, $\gcd(a,d)=1$ and $k\le a_1$, we have 
\begin{align*}
&\sum_{i=1}^{a-1}m_i=\frac{a}{2}\bigl((a-1)(h(q+1)+d)+h r(q+1)\bigr)\,,
\\ 
&\sum_{i=1}^{a-1}m_i^2=\frac{(q+1)\bigl((2 q+1)(a-r-1)+6 r(q+1)\bigr)}{6}h^2 a^2
+\frac{(a-1)a(2 a-1)}{6}d^2\\
&\quad+2 h a d(q+1)\biggl(\frac{(a-r-1)\bigl(q(4 a-4 r-1)-(a-r-1)\bigr)}{12 q} 
+\frac{r(2 a-r-1)}{2}\biggr)\,.
\end{align*}
\label{lem5a}
\end{Lem}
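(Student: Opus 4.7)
The plan is to re-use the minimal-residue-system argument from the proof of Lemma~\ref{lem5}. For every $j\ge 2$ we have $a_j = ha + (j-1)d \equiv (j-1)d \pmod{a}$, so the residues of $a_1,a_2,\dots,a_k$ modulo $a$ are the same $k$ classes $0, d, 2d, \dots, (k-1)d$ as in the pure arithmetic case. Since each $m_i$ is built from some $a_j$ ($j\ge 2$) plus a number of copies of $a_k$, the triangular array (\ref{minres}) still describes $\{m_i\}$; the only change is that each entry
$$
a_j + \ell a_k = (\ell+1)\,ha + \bigl((j-1)+\ell(k-1)\bigr)d
$$
now carries $ha$ in place of $a$, while the coefficient of $d$ is unchanged.

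For the first identity I would mimic (\ref{eq:303}): decomposing each entry into its $a$-part and its $d$-part, the $a$-coefficients across the array are as before but each is multiplied by $h$, while the $d$-part is unaffected. This gives
$$
\sum_{i=1}^{a-1} m_i = \frac{(q+1)\bigl(q(k-1)+2r\bigr)}{2}\,ha + \frac{\bigl(q(k-1)+r\bigr)\bigl(q(k-1)+r+1\bigr)}{2}\,d\,.
$$
Substituting $q(k-1)+r = a-1$ and regrouping yields $\tfrac{a}{2}\bigl((a-1)(h(q+1)+d)+hr(q+1)\bigr)$, as claimed.

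For the second identity I would expand
$$
\bigl((\ell+1)ha + ((j-1)+\ell(k-1))d\bigr)^2 = (\ell+1)^2 h^2 a^2 + 2(\ell+1)\bigl((j-1)+\ell(k-1)\bigr)had + \bigl((j-1)+\ell(k-1)\bigr)^2 d^2
$$
and sum over the array. Relative to (\ref{eq:304}), the $a^2$-piece acquires a factor $h^2$, the cross $ad$-piece acquires a factor $h$, and the $d^2$-piece is unchanged. Reusing $q(k-1)+r = a-1$ as in Lemma~\ref{lem5} then produces the second identity verbatim.

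The main obstacle, such as it is, lies in confirming that the three sub-sums constituting the cross term in (\ref{eq:304}) — the staircase contributions $1+2+\cdots+(k-1)$, then $2(k+(k+1)+\cdots+(2k-2))$, and so on through the partial final row of length $r$ — survive the passage from arithmetic to almost-arithmetic unchanged, so that $h$ genuinely factors out of the cross piece. Since that passage only rescales the $a$-coordinate of every array entry and leaves the combinatorial indexing intact, the verification is essentially automatic, and no fundamentally new calculation is required beyond the work already done for Lemma~\ref{lem5}.
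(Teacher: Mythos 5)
Your proposal is correct and matches the paper's intent exactly: the paper proves Lemma~\ref{lem5a} by the same device, namely reusing the minimal residue array (\ref{minres}) with each entry's $a$-coordinate rescaled by $h$ (so that the $a^2$-, $ad$- and $d^2$-pieces of the sums in Lemma~\ref{lem5} pick up factors $h^2$, $h$ and $1$ respectively), and then substituting $q(k-1)+r=a-1$. In fact the paper gives no further detail beyond ``similarly proved,'' so your write-up is, if anything, more explicit than the original.
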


\section{Weighted sums} 

In this section, we give a formula for the weighted sums 
$$
s^{(\lambda)}(a,a+d,\dots,a+(k-1)d):=\sum_{n\in{\rm NR}(a,a+d,\dots,a+(k-1)d)}\lambda^n n\,.  
$$ 
As in the previous sections, let $a_1=a$, $a_2=a+d$, $\dots$, $a_k=a+(k-1)d$ with $d>0$, $\gcd(a,d)=1$ and $k\le a_1$.  Let $a-1=q(k-1)+r$ with $0\le r<k-1$. 

\begin{theorem}  
For $\lambda\ne 0$ with $\lambda^d\ne 1$, $\lambda^a\ne 1$ and $\lambda^{a_k}\ne 1$, we have 
\begin{align*}
&s^{(\lambda)}(a,a+d,\dots,a+(k-1)d)\\
&=\frac{1}{\lambda^{a}-1}\biggl(\frac{\lambda^d(\lambda^{q a_k}-\lambda^{a_k})}{\lambda^{a_k}-1}\left(\frac{a_k\lambda^{a_k}-a\lambda^a}{\lambda^d-1}-\frac{d(\lambda^{a_k}-\lambda^a)}{(\lambda^d-1)^2}\right)\\
&\qquad +\frac{\lambda^d(\lambda^{a_k}-\lambda^a)}{\lambda^d-1}\left(\frac{q a_k\lambda^{q a_k}}{\lambda^{a_k}-1}-\frac{a_k\lambda^{a_k}(\lambda^{q a_k}-1)}{(\lambda^{a_k}-1)^2}\right)\\ 
&\qquad +\lambda^{q a_k+d}\left(\frac{a_{r+1}\lambda^{a_{r+1}}-a\lambda^a}{\lambda^d-1}-\frac{d(\lambda^{a_{r+1}}-\lambda^a)}{(\lambda^d-1)^2}\right)\\
&\qquad +q a_k\lambda^{q a_k+d}\frac{\lambda^{a_{r+1}}-\lambda^a}{\lambda^d-1}\biggr)\\ 
&\quad -\frac{a\lambda^{a}}{(\lambda^{a}-1)^2}\biggl(1+\frac{\lambda^d(\lambda^{a_k}-\lambda^a)}{\lambda^d-1}\frac{\lambda^{q a_k}-1}{\lambda^{a_k}-1} 
+\frac{\lambda^{q a_k+d}(\lambda^{a_{r+1}}-\lambda^a)}{\lambda^d-1}\biggr)\\
&\quad +\frac{\lambda}{(\lambda-1)^2}\,.  
\end{align*}
\label{th10}
\end{theorem}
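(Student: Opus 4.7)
The plan is to reduce Theorem~\ref{th10} to two Ap\'ery-type sums $S_0:=\sum_{i=1}^{a-1}\lambda^{m_i}$ and $S_1:=\sum_{i=1}^{a-1}m_i\lambda^{m_i}$, and then to evaluate each by summing over the rows of the minimal residue system (\ref{minres}). Since the set ${\rm R}$ of representable nonnegative integers is the disjoint union of the Ap\'ery classes $m_i+\{0,a,2a,\ldots\}$ with $0\le i\le a-1$ and $m_0=0$, one has $\sum_{n\ge 0,\,n\in{\rm R}}\lambda^n=(1+S_0)/(1-\lambda^a)$. Subtracting this from $\lambda/(1-\lambda)$ produces the generating function $\Phi(\lambda):=\sum_{n\in{\rm NR}}\lambda^n$, and a short differentiation of $s^{(\lambda)}=\lambda\Phi'(\lambda)$ yields the \emph{master identity}
\begin{equation*}
s^{(\lambda)}=\frac{\lambda}{(\lambda-1)^2}+\frac{S_1}{\lambda^a-1}-\frac{a\lambda^a(1+S_0)}{(\lambda^a-1)^2},
\end{equation*}
whose three summands already match the three outer blocks of Theorem~\ref{th10}.

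Next I would evaluate $S_0$ and $S_1$ using (\ref{minres}). Writing $B_K:=\sum_{s=2}^{K}\lambda^{a_s}$ and $A_K:=\sum_{s=2}^{K}a_s\lambda^{a_s}=\lambda B_K'$, a direct geometric sum gives $B_K=\lambda^d(\lambda^{a_K}-\lambda^a)/(\lambda^d-1)$, and differentiating produces the two-term rational expression that fills each inner bracket of the theorem. The first $q$ full rows of (\ref{minres}) then contribute a factor $\sum_{j=0}^{q-1}\lambda^{ja_k}=(\lambda^{qa_k}-1)/(\lambda^{a_k}-1)$ to $S_0$, together with a further factor $\sum_{j=0}^{q-1}j\lambda^{ja_k}$ (also a quotient-rule derivative) to $S_1$, while the partial last row supplies the $\lambda^{qa_k+d}$-pieces involving $a_{r+1}$ and $B_{r+1}$. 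Substituting the resulting closed forms into the master identity and regrouping by the denominators $\lambda^a-1$ and $(\lambda^a-1)^2$ yields the stated formula.

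The hard part is not any individual step but the bookkeeping: $B_K$, $A_K$, and $\sum_{j=0}^{q-1}j\lambda^{ja_k}$ are each two-term rational functions, so their various products expand into many cross-terms, and one must keep the $\lambda^d$- and $\lambda^{a_k}$-factors aligned throughout so that the final expression collapses into the compact form displayed in Theorem~\ref{th10}.
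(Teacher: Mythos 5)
Your proposal is correct and follows essentially the same route as the paper: the ``master identity'' you derive is exactly Lemma \ref{lem11} (which the paper quotes from \cite{KZ}, with the same $1+S_0$ accounting for $m_0=0$), and your row-by-row evaluation of $S_0$ and $S_1$ over the table (\ref{minres}) is precisely Lemma \ref{lem10}. The only cosmetic difference is that you rederive Lemma \ref{lem11} by differentiating the generating function of the Ap\'ery classes rather than citing it.
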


The proof is based upon the following results.  

\begin{Lem}  
\begin{align}
\sum_{i=1}^{a_1-1}m_i\lambda^{m_i}&=\frac{\lambda^d(\lambda^{q a_k}-\lambda^{a_k})}{\lambda^{a_k}-1}\left(\frac{a_k\lambda^{a_k}-a\lambda^a}{\lambda^d-1}-\frac{d(\lambda^{a_k}-\lambda^a)}{(\lambda^d-1)^2}\right)\notag\\
&\quad +\frac{\lambda^d(\lambda^{a_k}-\lambda^a)}{\lambda^d-1}\left(\frac{q a_k\lambda^{q a_k}}{\lambda^{a_k}-1}-\frac{a_k\lambda^{a_k}(\lambda^{q a_k}-1)}{(\lambda^{a_k}-1)^2}\right)\notag\\ 
&\quad +\lambda^{q a_k+d}\left(\frac{a_{r+1}\lambda^{a_{r+1}}-a\lambda^a}{\lambda^d-1}-\frac{d(\lambda^{a_{r+1}}-\lambda^a)}{(\lambda^d-1)^2}\right)\notag\\
&\quad +q a_k\lambda^{q a_k+d}\frac{\lambda^{a_{r+1}}-\lambda^a}{\lambda^d-1}\,,
\label{10-1}\\
\sum_{i=1}^{a_1-1}\lambda^{m_i}&=\frac{\lambda^d(\lambda^{a_k}-\lambda^a)}{\lambda^d-1}\frac{\lambda^{q a_k}-1}{\lambda^{a_k}-1}
+\frac{\lambda^{q a_k+d}(\lambda^{a_{r+1}}-\lambda^a)}{\lambda^d-1}\,. 
\label{10-0}
\end{align}
\label{lem10} 
\end{Lem}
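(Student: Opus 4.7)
The plan is to read off both sums directly from the tabular display of the minimal residue system in $(\ref{minres})$. Each entry has the form $a_\ell+(j-1)a_k$ with $\ell\in\{2,\dots,k\}$ and $j\in\{1,\dots,q\}$ in the $q$ full rows, plus entries $a_\ell+qa_k$ with $\ell\in\{2,\dots,r+1\}$ in the short final row. Along any row, $\lambda^{a_\ell}$ is a geometric progression in $\lambda^d$ with first term $\lambda^{a_2}=\lambda^{a+d}$; down any column the row prefactor $\lambda^{(j-1)a_k}$ is geometric in $\lambda^{a_k}$. Hence each sum factors as an inner row sum times an outer column sum, plus a short-row correction.

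For $(\ref{10-0})$, I would introduce the partial row sum
\[
S_{1,p}:=\sum_{\ell=2}^p\lambda^{a_\ell}=\frac{\lambda^d(\lambda^{a_p}-\lambda^a)}{\lambda^d-1},
\]
so that $\sum_{i=1}^{a-1}\lambda^{m_i}=S_{1,k}\sum_{j=1}^q\lambda^{(j-1)a_k}+\lambda^{qa_k}S_{1,r+1}$. Inserting the closed form $\sum_{j=1}^q\lambda^{(j-1)a_k}=(\lambda^{qa_k}-1)/(\lambda^{a_k}-1)$ and rewriting gives $(\ref{10-0})$ at once.

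For $(\ref{10-1})$, split each term $m\lambda^m$ according to $m=a_\ell+(j-1)a_k$ (or $m=a_\ell+qa_k$ in the final row), producing
\[
\sum_{i=1}^{a-1}m_i\lambda^{m_i}=T_{1,k}\sum_{j=1}^q\lambda^{(j-1)a_k}+a_k S_{1,k}\sum_{j=1}^q(j-1)\lambda^{(j-1)a_k}+\lambda^{qa_k}T_{1,r+1}+qa_k\lambda^{qa_k}S_{1,r+1},
\]
where $T_{1,p}:=\sum_{\ell=2}^p a_\ell\lambda^{a_\ell}$. The two new ingredients both reduce to derivatives of geometric series. Taking $\lambda$ times the $\lambda$-derivative of $S_{1,p}$ yields
\[
T_{1,p}=\lambda^d\!\left(\frac{a_p\lambda^{a_p}-a\lambda^a}{\lambda^d-1}-\frac{d(\lambda^{a_p}-\lambda^a)}{(\lambda^d-1)^2}\right),
\]
and differentiating $(y^q-1)/(y-1)$ at $y=\lambda^{a_k}$ gives $\sum_{j=1}^q(j-1)\lambda^{(j-1)a_k}=q\lambda^{qa_k}/(\lambda^{a_k}-1)-\lambda^{a_k}(\lambda^{qa_k}-1)/(\lambda^{a_k}-1)^2$. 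Substituting these closed forms into the above decomposition delivers the four summands of $(\ref{10-1})$.

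The argument is entirely elementary; the main obstacle is the bookkeeping that keeps the row prefactor $\lambda^{(j-1)a_k}$ correctly distributed and arranges the differentiated geometric series in the exact shape that appears on the right-hand side. A convenient observation is that when $r=0$ the short final row is empty and $a_{r+1}=a$, so the terms involving $T_{1,r+1}$ and $S_{1,r+1}$ vanish automatically, and no separate case analysis is needed.
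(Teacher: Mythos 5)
Your decomposition is exactly the one the paper intends: the paper's proof of Lemma \ref{lem10} consists of displaying the table of weighted elements arranged as in (\ref{minres}) and declaring that the sums are obtained ``by summing up all the elements,'' so your row-by-row and column-by-column evaluation via geometric series and their $\lambda$-derivatives is precisely the computation the paper omits, and your closed forms for $S_{1,p}$, $T_{1,p}$ and $\sum_{j=1}^q(j-1)\lambda^{(j-1)a_k}$ are all correct. One point deserves attention, however. Your (correct) bookkeeping produces as first summand
\[
T_{1,k}\sum_{j=1}^q\lambda^{(j-1)a_k}
=\frac{\lambda^d(\lambda^{q a_k}-1)}{\lambda^{a_k}-1}\left(\frac{a_k\lambda^{a_k}-a\lambda^a}{\lambda^d-1}-\frac{d(\lambda^{a_k}-\lambda^a)}{(\lambda^d-1)^2}\right),
\]
whereas the first summand printed in (\ref{10-1}) carries the factor $(\lambda^{q a_k}-\lambda^{a_k})/(\lambda^{a_k}-1)=\sum_{j=2}^q\lambda^{(j-1)a_k}$ and therefore omits the first row's contribution $T_{1,k}=\sum_{\ell=2}^k a_\ell\lambda^{a_\ell}$. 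A quick check with $a=5$, $d=1$, $k=3$ (so $q=2$, $r=0$ and $\{m_i\}=\{6,7,13,14\}$) shows the printed right-hand side equals $13\lambda^{13}+14\lambda^{14}$, missing $6\lambda^6+7\lambda^7$, while your version gives the full sum; the worked example $s^{(2)}(7,9,11)=2160333442$ is likewise only reproduced by your version. So do not try to force your answer into the printed shape: the statement contains a typo ($\lambda^{a_k}$ should be $1$ in that numerator), and your derivation yields the correct formula. Your closing observation that the last two summands vanish when $r=0$ because $a_{r+1}=a_1=a$ agrees with the paper's own remark.
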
  

\noindent 
{\it Remark.}  
The last two terms in (\ref{10-1}) and the last term in (\ref{10-0}) are equal to $0$ when $r=0$.  
\bigskip 

\begin{proof}[Proof of Lemma \ref{lem10}.]
(\ref{10-1}) is obtained by summing up all the elements 
{\small 
\begin{align*}
&a_2\lambda^{a_2}&\,&\dots\dots&&a_{k-1}\lambda^{a_{k-1}}&\,&a_k\lambda^{a_k}\\
&(a_2+a_k)\lambda^{a_2+a_k}&\,&\dots\dots&&(a_{k-1}+a_k)\lambda^{a_{k-1}+a_k}&\,&(2 a_k)\lambda^{2 a_k}\\
&\dots&\,&\dots\dots&&\dots&\,&\dots\\
&\bigl(a_2+(q-1)a_k\bigr)\lambda^{a_2+(q-1)a_k}&\,&\dots\dots&&\bigl(a_{k-1}+(q-1)a_k\bigr)\lambda^{a_{k-1}+(q-1)a_k}&\,&(q a_k)\lambda^{q a_k}\\
&(a_2+q a_k)\lambda^{a_2+q a_k}&\,&\dots&&\!\!\!\!\!\!\!\!\!\!\!\!(a_{r+1}+q a_k)\lambda^{a_{r+1}+q a_k}&&\,&
\end{align*}
}  
Similarly, (\ref{10-0}) is obtained by summing up all the elements 
\begin{align*}
&\lambda^{a_2}&\,&\dots\dots&&\lambda^{a_{k-1}}&\,&\lambda^{a_k}\\
&\lambda^{a_2+a_k}&\,&\dots\dots&&\lambda^{a_{k-1}+a_k}&\,&\lambda^{2 a_k}\\
&\dots&\,&\dots\dots&&\dots&\,&\dots\\
&\lambda^{a_2+(q-1)a_k}&\,&\dots\dots&&\lambda^{a_{k-1}+(q-1)a_k}&\,&\lambda^{q a_k}\\
&\lambda^{a_2+q a_k}&\,&\dots&&\!\!\!\!\!\!\!\!\!\!\!\!\lambda^{a_{r+1}+q a_k}&&\,&
\end{align*}
\end{proof}

We also need the following formula in \cite{KZ}.  

\begin{Lem}  
If $\lambda\ne 0$ and $\lambda^{a_1}\ne1$, then  
\begin{align*}  
&s^{(\lambda)}(a_1,a_2,\dots,a_k)\\
&=\frac{1}{\lambda^{a_1}-1}\sum_{i=0}^{a_1-1}m_i\lambda^{m_i}
-\frac{a_1\lambda^{a_1}}{(\lambda^{a_1}-1)^2}\sum_{i=0}^{a_1-1}\lambda^{m_i}+\frac{\lambda}{(\lambda-1)^2}\,.
\end{align*}  
\label{lem11}
\end{Lem}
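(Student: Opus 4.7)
My plan is to prove Lemma~\ref{lem11} via the complement identity
\[
s^{(\lambda)}(a_1,\dots,a_k)=\sum_{n\ge 1}n\lambda^{n}-\sum_{\substack{n\in R\\ n\ge 1}}n\lambda^{n},
\]
where the first term, $\lambda/(\lambda-1)^2$, already accounts for the last summand of the claimed formula. For the second term, I would decompose $R\cap\mathbb{Z}_{\ge 1}$ by residue class modulo $a_1$: by the defining property of the minimal residue system $\{m_i\}$, for $1\le i\le a_1-1$ the positive representable integers with residue $i$ form the progression $\{m_i+ja_1:j\ge 0\}$, while for $i=0$ they form $\{ja_1:j\ge 1\}$. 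Since $m_0=0$, both cases unify as $\bigcup_{i=0}^{a_1-1}\{m_i+ja_1:j\ge 0\}$ for the purpose of computing $\sum n\lambda^n$, because the formally added term $n=0$ contributes $0\cdot\lambda^{0}=0$.

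I would next work under $|\lambda|<1$ so that every series converges absolutely, observing that both sides of the target identity are rational functions of $\lambda$ whose only poles lie in $\{0\}\cup\{\lambda:\lambda^{a_1}=1\}$; hence the identity, once established on the open disk, extends to every admissible $\lambda$ by analytic continuation (equivalently, by clearing denominators and comparing polynomials). For each residue class the inner sum is
\[
\sum_{j=0}^{\infty}(m_i+ja_1)\lambda^{m_i+ja_1}=\frac{m_i\lambda^{m_i}}{1-\lambda^{a_1}}+\frac{a_1\lambda^{a_1}\lambda^{m_i}}{(1-\lambda^{a_1})^{2}},
\]
obtained from $\sum_{j\ge 0}x^{j}=1/(1-x)$ and $\sum_{j\ge 0}jx^{j}=x/(1-x)^{2}$ specialised at $x=\lambda^{a_1}$.

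Summing over $i=0,\dots,a_1-1$ and subtracting from $\lambda/(\lambda-1)^2$ then yields
\[
s^{(\lambda)}=\frac{\lambda}{(\lambda-1)^2}-\frac{1}{1-\lambda^{a_1}}\sum_{i=0}^{a_1-1}m_i\lambda^{m_i}-\frac{a_1\lambda^{a_1}}{(1-\lambda^{a_1})^{2}}\sum_{i=0}^{a_1-1}\lambda^{m_i},
\]
and the substitutions $-1/(1-\lambda^{a_1})=1/(\lambda^{a_1}-1)$ and $(1-\lambda^{a_1})^{2}=(\lambda^{a_1}-1)^{2}$ recover the statement of Lemma~\ref{lem11} exactly. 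There is no genuinely hard step here; the only point requiring care is the bookkeeping at the $i=0$ residue class, which is handled cleanly by the observation $m_0\lambda^{m_0}=0$, and the legitimacy of the extension from $|\lambda|<1$ to all $\lambda$ with $\lambda\ne 0$ and $\lambda^{a_1}\ne 1$.
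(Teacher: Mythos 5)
Your proof is correct, but it takes a genuinely different route from the paper's. The paper works entirely with finite sums over the nonrepresentable set: for each residue class $i$ it lists the gaps as $m_i-ja_1$ for $1\le j\le\ell$ with $\ell=(m_i-i)/a_1$, evaluates the resulting finite geometric--arithmetic sums in closed form, and then uses the explicit values of $\sum_{i=1}^{a_1-1}i\lambda^i$ and $\sum_{i=1}^{a_1-1}\lambda^i$ to collapse the residual terms into $-a_1\lambda^{a_1}/(\lambda^{a_1}-1)^2+\lambda/(\lambda-1)^2$; this gives the identity for every $\lambda$ with $\lambda\ne 0$ and $\lambda^{a_1}\ne 1$ in one stroke, with no convergence questions. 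You instead pass to the complement, summing $n\lambda^n$ over all positive integers and over the representable set, which forces you into infinite series, the restriction $|\lambda|<1$, and a final continuation step. That step is legitimate and you state it correctly (the left side is a polynomial in $\lambda$, the right side is rational with poles only at $a_1$-th roots of unity, so the identity established on the disc persists), and it genuinely must be included, since the paper applies the lemma at $\lambda=2$ and $\lambda=\sqrt{-1}$. What your route buys is cleaner bookkeeping: the term $\lambda/(\lambda-1)^2$ appears immediately as $\sum_{n\ge1}n\lambda^n$ rather than emerging from a cancellation, and the $i=0$ class is absorbed automatically --- your $i=0$ summand supplies exactly the extra contribution $\lambda^{m_0}=\lambda^0=1$ that the paper has to insert by hand in its remark following the lemma. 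What the paper's route buys is that every manipulation is finite and algebraic, so no analytic argument is needed at all.
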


\begin{proof}[Proof of Theorem \ref{th10}.] 
Substituting (\ref{10-1}) and (\ref{10-0}) in Lemma \ref{lem10} into the formula in Lemma \ref{lem11}, we get the desired result. Notice that we do need an additional quantity $\lambda^{m_0}=\lambda^0=1$ in the second term in Lemma \ref{lem11}.  
\end{proof}

\subsection{Examples}  
When $a=7$, $r=2$ and $k=3$, we get $q=3$ and $r=0$. If $\lambda=2$ and $d=2$, then by Theorem \ref{th10},
we have $s^{(2)}(7, 9, 11)=2160333442$. In fact, the sum of nonrepresentable numbers
is 
\begin{align*}
&2\cdot 1 + 2^2\cdot 2 + 2^3\cdot 3 + 2^4\cdot 4 + 2^5\cdot 5 + 2^6\cdot 6 + 2^8\cdot 8 + 2^{10}\cdot 10\\ 
&\quad + 2^{12}\cdot 12 + 2^{13}\cdot 13 + 2^{15}\cdot 15 + 2^{17}\cdot 17 + 2^{19}\cdot 19 + 2^{24}\cdot 24 + 2^{26}\cdot 26\\ 
&=2160333442\,. 
\end{align*}

When $a=6$, $r=5$ and $k=4$, we get $q=1$ and $r=2$. If $\lambda=\sqrt{-1}$ and $d=5$, then $s^{(\sqrt{-1})}(6, 11, 16, 21)=-20-22\sqrt{-1}$.

\section{Arithmetic sequences with an additional term}  

Consider the case 
$$
a_1=a,\, a_2=a+d,\, a_3=a+2 d,\, \dots,\, a_k=a+(k-1)d,\, a_{k+1}=a+K d\,,  
$$   
where $\gcd(a,d)=1$, $K>k$ and $a\ge k$. Put 
\begin{align}  
K-1&=q(k-1)+r,\quad 0\le r<k-1\,,\notag\\  
a&=\alpha K+\beta, \quad 0\le\beta<K\,.
\label{eq:aa} 
\end{align}
In \cite[(3.16)]{se77}, as $d=1$, it is shown that 
\begin{align*}
&n(a,a+1,a+2,\dots,a+k-1,a+K)\\
&=\frac{\alpha\bigl(a+(q+1)(r-1)+q K+\beta+\bigr)}{2}+\frac{(\gamma+1)(\beta+\delta-1)}{2}\,.
\end{align*}
Some more special cases of the number of nonrepresentable numbers are given in \cite{se77}.

The minimal residue system $1,2,\dots,a-1\pmod a$, where all residues appear once and only once, can be constructed as follows. The case $d=1$ is illustrated in (\cite{se77}), but we explain here again as the general $d$ case.   

The first line of this minimal residue system is the same as the whole numbers in (\ref{minres}). There are totally $(k-1)q+r=K-1$ elements, which consist the residue system $\{d,2 d,\dots,(K-1)d\}\pmod{a}$. The second line is of $a_{k+1}$, and $a_{k+1}$ plus each number in (\ref{minres}), representing the residue system  $\{K d,(K+1)d,\dots,(2 K-1)d\}\pmod{a}$. Hence, there are totally $K$ elements in the second line. Similarly, the $j$-th line ($1\le j\le\alpha-1$) is of $(j-1)a_{k+1}$, and $(j-1)a_{k+1}$ plus each number in (\ref{minres}), representing the residue system  $\{(j-1)K d,((j-1)K+1)d,\dots,(j K-1)d\}\pmod{a}$. Hence, there are totally $K$ elements in the $j$-th line ($2\le j\le\alpha-1$). 
The $\alpha$-th line ends with the element 
$$
t'=\begin{cases}
a_{r+1}+q a_k+(\alpha-1)a_{k+1}=(q+\alpha+1)a-(\beta+1)d&\text{if $r>0$};\\
q a_k+(\alpha-1)a_{k+1}=(q+\alpha)a-(\beta+1)d&\text{if $r=0$}\,. 
\end{cases}
$$ 
If $\beta=0$, then we have already gotten the minimal residue system. Otherwise, put  
\begin{equation}
\beta-1=\gamma(k-1)+\delta,\quad 0\le\delta<k-1\,. 
\label{beta-gamma} 
\end{equation}
The final line is of $\beta$ elements and ends with the element 
$$
t''=\begin{cases}
a_{\delta+1}+\gamma a_k+\alpha a_{k+1}=(\gamma+\alpha+2)a-d&\text{if $\delta>0$};\\
\gamma a_k+\alpha a_{k+1}=(\gamma+\alpha+1)a-d&\text{if $\delta=0$}\,. 
\end{cases}
$$

In (\ref{eq:303}), set $q(k-1)+r=K-1$ instead of $q(k-1)+r=a-1$. Then we have 
\begin{align*}
S_1&=S_1(a,d,q,r)\\
&:=\frac{(q+1)(K-1+r)}{2}a+\frac{(K-1)K}{2}d\,. 
\end{align*} 
In (\ref{eq:304}), set $q(k-1)+r=K-1$ instead of $q(k-1)+r=a-1$. Then we have 
\begin{align*} 
S_2&=S_2(a,d,q,r)\\
&:=\frac{(q+1)\bigl((2 q+1)(K-r-1)+6 r(q+1)\bigr)}{6}a^2
+\frac{(K-1)K(2 K-1)}{6}d^2\\
&\quad+2 a d(q+1)\biggl(\frac{(K-r-1)\bigl(q(4 K-4 r-1)-(K-r-1)\bigr)}{12 q}\\
&\qquad +\frac{r(2 K-r-1)}{2}\biggr)\,. 
\end{align*} 
When $\beta=0$, the whole summation of the least elements modulo $i\pmod a$ ($1\le i\le a-1$) is equal to 
\begin{align*}
\sum_{i=1}^{a-1}m_i&=\sum_{j=1}^\alpha\bigl((j-1)a_{k+1}K+S_1\bigr)\\
&=\frac{\alpha(\alpha-1)K(a+K d)}{2}+\alpha S_1
\,. 
\end{align*}
(When $\beta>0$, we need more additional elements from the ($\alpha+1$)-st line, whose sum is denoted by $T_1$.) The whole square summation is equal to 
\begin{align*} 
\sum_{i=1}^{a-1}m_i^2&=
\sum_{j=1}^\alpha\bigl((j-1)^2 a_{k+1}^2 K+2(j-1)a_{k+1}S_1+S_2\bigr)\\ 
&=\frac{\alpha(\alpha-1)(2\alpha-1)(a+K d)^2 K}{6}+\alpha(\alpha-1)(a+K d)S_1+\alpha S_2
\,. 
\end{align*} 
(When $\beta>0$, we need more additional elements from the ($\alpha+1$)-st line, whose sum is denoted by $T_2$.) 
Substituting them into the third formula in Lemma \ref{lem1}, by $\alpha=a K$, we get 
\begin{align*}  
&s(a,a+d,a+2 d,\dots,a+(k-1)d,a+K d)\\ 
&=\frac{1}{2 a}\left(\frac{\alpha(\alpha-1)(2\alpha-1)(a+K d)^2 K}{6}+\alpha(\alpha-1)(a+K d)S_1+\alpha S_2\right)\\ 
&\quad -\frac{1}{2}\left(\frac{\alpha(\alpha-1)K(a+K d)}{2}+\alpha S_1\right)
+\frac{a^2-1}{12}\\
&=\frac{\alpha(\alpha-1)(a+K d)K\bigl(2(\alpha-2)a+(2\alpha-1)K d\bigr)}{12 a}\\
&\quad +\frac{\alpha\bigl((\alpha-2)a+(\alpha-1)K d\bigr)}{2 a}S_1
+\frac{\alpha S_2}{2 a}+\frac{a^2-1}{12}\\
&=\frac{(a-K)(a+K d)\bigl(2 a^2+2 a K(d-2)-K^2 d\bigr)}{12 K^2}+\frac{a^2+a K(d-2)-K^2 d}{2 K^2}S_1\\
&\quad +\frac{S_2}{2 K}+\frac{a^2-1}{12}\,. 
\end{align*} 

When $\beta>0$, from (\ref{beta-gamma}) the sum of additional terms 
\begin{align*}  
&\alpha a_{k+1}&&&&&&\\
&\alpha a_{k+1}+a_2&&\alpha a_{k+1}+a_3&&\dots&&\alpha a_{k+1}+a_k\\ 
&\alpha a_{k+1}+a_k+a_2&&\alpha a_{k+1}+a_k+a_3&&\dots&&\alpha a_{k+1}+2 a_k\\
&\dots&&\dots&&\dots&&\dots\\
&\alpha a_{k+1}+(\gamma-1)a_k+a_2&&\alpha a_{k+1}+(\gamma-1)a_k+a_3&&\dots&&\alpha a_{k+1}+\gamma a_k\\
&\alpha a_{k+1}+\gamma a_k+a_2&&\dots&&&&\hskip-3cm\!\!\!\!\!\!\!\!\!\alpha a_{k+1}+\gamma a_k+a_{\delta+1}\\
\end{align*}
is given by 
\begin{align*}  
&T_1=T_1(a,d,K,\alpha,\beta,\delta,\gamma)\\ 
&:=\alpha\beta a_{k+1}+\gamma(a_2+a_3+\cdots+a_k)\\  
&\quad +(k-1)a_k\bigl(1+2+\cdots+(\gamma-1)\bigr)+\gamma\delta a_k+(a_2+\cdots+a_{\delta+1})\\
&=\alpha\beta(a+K d)+\gamma\left((k-1)a+\frac{(k-1)k}{2}d\right)
+(k-1)\bigl(a+(k-1)d\bigr)\frac{\gamma(\gamma-1)}{2}\\
&\quad +\gamma\delta\bigl(a+(k-1)d\bigr)+\delta a+\frac{\delta(\delta+1)}{2}d\\
&=\left(\alpha(\beta-1)+\frac{(k-1)\gamma(\gamma+1)}{2}+\delta(\gamma+1)\right)a\\
&\quad+\left(\alpha(\beta-1)K+\frac{(k-1)\gamma(\gamma k-\gamma+1)}{2}+\frac{\delta\bigl(2\gamma(k-1)+\delta+1\bigr)}{2}\right)d\\
&=\biggl(\alpha\beta+\frac{(\beta+\delta-1)(\gamma+1)}{2}\biggr)a\\
&\quad+\biggl(\alpha\beta K+\frac{(\beta-\delta-1)(\beta-\delta)}{2}+\frac{\delta(2\beta-\delta-1)}{2}\biggr)d\,. 
\end{align*}
The sum of the square of additional terms is given by 
$$ 
T_2=T_2(a,d,K,\beta,\delta,\gamma):=\beta(\alpha a_{k+1})^2+2\alpha a_{k+1}(T_1-\alpha\beta a_{k+1})+T_3\,, 
$$
where 
\begin{align*}  
T_3&=\gamma(a_2^2+a_3^2+\cdots+a_k^2)\\
&\quad +2 a_k\bigl(1+2+\cdots+(\gamma-1)\bigr)(a_2+a_3+\cdots+a_k)\\
&\quad +(k-1)a_k^2\bigl(1^2+2^2+\cdots+(\gamma-1)^2\bigr)\\
&\quad +\delta\gamma^2 a_k^2+2\gamma a_k(a_2+a_3+\cdots+a_{\delta+1})\\
&\quad +(a_2^2+a_3^2+\cdots+a_{\delta+1}^2)\\
&=\gamma\left((k-1)a^2+(k-1)k a d+\frac{(k-1)k(2 k-1)}{6}d^2\right)\\
&\quad +\bigl(a+(k-1)d\bigr)(\gamma-1)\gamma\left((k-1)a+\frac{(k-1)k}{2}d\right)\\ 
&\quad +(k-1)\bigl(a+(k-1)d\bigr)^2\frac{(\gamma-1)\gamma(2\gamma-1)}{6}\\
&\quad +\delta\gamma^2\bigl(a+(k-1)d\bigr)^2+2\gamma\bigl(a+(k-1)d\bigr)\left(\delta a+\frac{\delta(\delta+1)}{2}d\right)\\
&\quad +\delta a^2+\delta(\delta+1)a d+\frac{\delta(\delta+1)(2\delta+1)}{6}d^2\\
&=\left(\frac{(\beta-\delta-1)(\gamma+1)(2\gamma+1)}{6}+\delta(\gamma+1)^2\right)a^2\\
&\quad +\left(\frac{(\beta-\delta-1)\bigl(4(\beta-\delta)-k\bigr)}{6}+\delta(2\beta-\delta-1)\right)(\gamma+1)a d\\
&\quad +\frac{(\beta-1)\beta(2\beta-1)}{6}d^2\,.   
\end{align*}

When $\beta>0$, the whole summation of the least elements modulo $i\pmod a$ ($1\le i\le a-1$) is equal to 
\begin{align*}
\sum_{i=1}^{a-1}m_i&=\sum_{j=1}^\alpha\bigl((j-1)a_{k+1}K+S_1\bigr)+T_1\\
&=\frac{\alpha(\alpha-1)K(a+K d)}{2}+\alpha S_1+T_1
\,. 
\end{align*}
The whole square summation is equal to 
\begin{align*} 
\sum_{i=1}^{a-1}m_i^2&=
\sum_{j=1}^\alpha\bigl((j-1)^2 a_{k+1}^2 K+2(j-1)a_{k+1}S_1+S_2\bigr)+T_2\\ 
&=\frac{\alpha(\alpha-1)(2\alpha-1)(a+K d)^2 K}{6}\\
&\quad +\alpha(\alpha-1)(a+K d)S_1+\alpha S_2+T_2
\,.
\end{align*} 
Substituting them into the third formula in Lemma \ref{lem1}, we get 
\begin{align*}  
&s(a,a+d,a+2 d,\dots,a+(k-1)d,a+K d)\\
&=\frac{1}{2 a}\biggl(\frac{\alpha(\alpha-1)(2\alpha-1)(a+K d)^2 K}{6}\\
&\quad +\alpha(\alpha-1)(a+K d)S_1+\alpha S_2+T_2\biggr)\\ 
&\quad -\frac{1}{2}\left(\frac{\alpha(\alpha-1)K(a+K d)}{2}+\alpha S_1+T_1\right)
+\frac{a^2-1}{12}\\
&=\frac{\alpha(\alpha-1)(a+K d)K\bigl(2(\alpha-2)a+(2\alpha-1)K d\bigr)}{12 a}\\
&\quad +\frac{\alpha\bigl((\alpha-2)a+(\alpha-1)K d\bigr)}{2 a}S_1\\
&\quad +\frac{\alpha S_2}{2 a}-\frac{T_1}{2}+\frac{T_2}{2 a}+\frac{a^2-1}{12}\,. 
\end{align*}

\begin{theorem}  
Assume that $\gcd(a,d)=1$, $K>k$ and $a\ge k$. Integers $q$, $r$, $\alpha$, $\beta$, $\gamma$ and $\delta$ are decided as in (\ref{eq:aa}) and (\ref{beta-gamma}).  
Then we have 
\begin{align*} 
&s(a,a+d,a+2 d,\dots,a+(k-1)d,a+K d)\\
&=\frac{\alpha(\alpha-1)(a+K d)K\bigl(2(\alpha-2)a+(2\alpha-1)K d\bigr)}{12 a}\\
&\quad +\frac{\alpha\bigl((\alpha-2)a+(\alpha-1)K d\bigr)}{2 a}S_1\\
&\quad +\frac{\alpha S_2}{2 a}-\frac{T_1}{2}+\frac{T_2}{2 a}+\frac{a^2-1}{12}\,, 
\end{align*} 
where    
\begin{align*}  
S_1&=\frac{(q+1)(K-1+r)}{2}a+\frac{(K-1)K}{2}d\,,\\ 
S_2&=\frac{(q+1)\bigl((2 q+1)(K-r-1)+6 r(q+1)\bigr)}{6}a^2
+\frac{(K-1)K(2 K-1)}{6}d^2\\
&\quad+2 a d(q+1)\biggl(\frac{(K-r-1)\bigl(q(4 K-4 r-1)-(K-r-1)\bigr)}{12 q}\\
&\qquad +\frac{r(2 K-r-1)}{2}\biggr)\,. 
\end{align*}
When $a\mid K$, $T_1=T_2=0$. When $a\nmid K$, 
\begin{align*} 
&T_1=T_1(a,d,K,\alpha,\beta,\delta,\gamma)\\ 
&=\biggl(\alpha\beta+\frac{(\beta+\delta-1)(\gamma+1)}{2}\biggr)a\\
&\quad+\biggl(\alpha\beta K+\frac{(\beta-\delta-1)(\beta-\delta)}{2}+\frac{\delta(2\beta-\delta-1)}{2}\biggr)d  
\end{align*}
and  
$$ 
T_2=T_2(a,d,K,\beta,\delta,\gamma):=\beta(\alpha a_{k+1})^2+2\alpha a_{k+1}(T_1-\alpha\beta a_{k+1})+T_3\,, 
$$
where 
\begin{align*}  
T_3&=\left(\frac{(\beta-\delta-1)(\gamma+1)(2\gamma+1)}{6}+\delta(\gamma+1)^2\right)a^2\\
&\quad +\left(\frac{(\beta-\delta-1)\bigl(4(\beta-\delta)-k\bigr)}{6}+\delta(2\beta-\delta-1)\right)(\gamma+1)a d\\
&\quad +\frac{(\beta-1)\beta(2\beta-1)}{6}d^2\,.   
\end{align*}
\label{th-aa}
\end{theorem}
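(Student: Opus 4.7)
The plan is to apply the third formula of Lemma \ref{lem1} to the $k+1$ generators $a,a+d,\ldots,a+(k-1)d,a+Kd$, with the role of $a_1$ played by $a$. So I need closed forms for $\sum_{i=1}^{a-1}m_i$ and $\sum_{i=1}^{a-1}m_i^2$, where $\{m_i\}$ is the minimal residue system modulo $a$ already described in the excerpt. The key structural observation is that this residue system splits into $\alpha$ full ``blocks'' of size $K$, followed by a final partial block of size $\beta$; in the $j$-th block ($1\le j\le\alpha$), each element is $(j-1)a_{k+1}$ plus one of the $K$ elements of the basic block. The basic block in turn is exactly the array (\ref{minres}) used in Lemma \ref{lem5}, except that the parameters $q,r$ now satisfy $q(k-1)+r=K-1$ instead of $q(k-1)+r=a-1$.

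I would first define $S_1$ and $S_2$ as the sum and sum-of-squares of the basic block. These are obtained verbatim from the computations (\ref{eq:303}) and (\ref{eq:304}) in the proof of Lemma \ref{lem5}, with $a$ replaced by $K$ on the right-hand side; this yields the formulas for $S_1$ and $S_2$ quoted in the theorem statement. For the full blocks, the translation by $(j-1)a_{k+1}$ and the binomial expansion give
\begin{align*}
\sum_{j=1}^{\alpha}\bigl((j-1)a_{k+1}K+S_1\bigr)&=\frac{\alpha(\alpha-1)K(a+Kd)}{2}+\alpha S_1,\\
\sum_{j=1}^{\alpha}\bigl((j-1)^2 a_{k+1}^2 K+2(j-1)a_{k+1}S_1+S_2\bigr)&=\frac{\alpha(\alpha-1)(2\alpha-1)(a+Kd)^2 K}{6}\\
&\quad+\alpha(\alpha-1)(a+Kd)S_1+\alpha S_2,
\end{align*}
using the standard evaluations of $\sum(j-1)$ and $\sum(j-1)^2$.

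Next I handle the partial final block of $\beta$ terms. Since $\beta-1=\gamma(k-1)+\delta$ with $0\le\delta<k-1$, this block is a rectangular fragment of the basic block pattern, shifted by $\alpha a_{k+1}$. I would list its entries explicitly (as in the table before the definition of $T_1$) and read off the sum $T_1$ and the sum-of-squares $T_2$. For $T_2$ I would use the identity $T_2=\beta(\alpha a_{k+1})^2+2\alpha a_{k+1}(T_1-\alpha\beta a_{k+1})+T_3$, where $T_3$ is the sum of squares of the \emph{un-shifted} fragment; then $T_3$ is computed as in Lemma \ref{lem5} but for the truncated shape dictated by $\gamma$ and $\delta$. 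Putting everything together, one obtains
\begin{align*}
\sum_{i=1}^{a-1}m_i&=\frac{\alpha(\alpha-1)K(a+Kd)}{2}+\alpha S_1+T_1,\\
\sum_{i=1}^{a-1}m_i^2&=\frac{\alpha(\alpha-1)(2\alpha-1)(a+Kd)^2 K}{6}+\alpha(\alpha-1)(a+Kd)S_1+\alpha S_2+T_2,
\end{align*}
and substitution into the third formula of Lemma \ref{lem1} produces the claimed expression for $s(a,a+d,\ldots,a+(k-1)d,a+Kd)$; the case $\beta=0$ (i.e.\ $a\mid K$) is recovered by setting $T_1=T_2=0$.

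The only delicate step is the bookkeeping for $T_1$, $T_2$, $T_3$: one has to sum the rows of the partial block (each of length $k-1$, except possibly the last which has length $\delta$), keep track of how many copies of $a_k$ each row contributes, and then carry out the algebraic simplification using $\beta-1=\gamma(k-1)+\delta$ to rewrite the expressions in the compact form stated in the theorem. This is where most of the arithmetic happens, but it is entirely mechanical once the partial-block picture is drawn; no new idea beyond Lemma \ref{lem1} and the block decomposition of the minimal residue system is required.
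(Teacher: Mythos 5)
Your proposal is correct and follows essentially the same route as the paper: decompose the minimal residue system modulo $a$ into $\alpha$ shifted copies of the basic block (with $q(k-1)+r=K-1$ in place of $a-1$, giving $S_1$ and $S_2$ from the computations of Lemma \ref{lem5}), add the partial block of $\beta$ terms contributing $T_1$ and $T_2$, and substitute the resulting $\sum m_i$ and $\sum m_i^2$ into the third formula of Lemma \ref{lem1}. The only cosmetic difference is that you describe all $\alpha$ blocks as having size $K$ while the paper's first line has $K-1$ elements (it omits $m_0=0$), but since that omitted element contributes $0$ to both sums this does not affect the argument.
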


\noindent 
{\it Remark.}  
When $K=a$ and $\beta=0$, by $\sum_{i=1}^{a-1}m_i=S_1$ and $\sum_{i=1}^{a-1}m_i^2=S_2$, Theorem \ref{th-aa} is reduced to Theorem \ref{th1}. 


If $\alpha\nmid K$, then by $a=\alpha K+\beta$, we have 
\begin{align*}  
&s(a,a+d,a+2 d,\dots,a+(k-1)d,a+K d)\\
&=\frac{(a-K)(a+K d)\bigl(2 a^2+2 a K(d-2)-K^2 d\bigr)}{12 K^2}+\frac{a^2+a K(d-2)-K^2 d}{2 K^2}S_1\\
&\quad +\frac{S_2}{2 K}+\frac{a^2-1}{12}\\
&=\frac{(a-\beta)(a-\beta-K)(a+K d)\bigl(2(a-\beta-2 K)a+(2 a-2\beta-K)d\bigr)}{12 a K^2}\\
&\quad +\frac{(a-\beta)\bigl((a-\beta-2 K)a+(a-\beta-K)d\bigr)}{2 a K^2}S_1\\ 
&\quad +\frac{a-\beta}{2 a K}S_2-\frac{T_1}{2}+\frac{T_2}{2 a}+\frac{a^2-1}{12}\,.
\end{align*}

\bigskip 

As a special case,  
\begin{align*}
g(a,a+1,a+2,a+4)&=(a+1)\fl{\frac{a}{4}}+\fl{\frac{a+1}{4}}+2\fl{\frac{a+2}{4}}-1\,,\quad{\rm \cite{dm64}}\\ 
n(a,a+1,a+2,a+4)&=\fl{\frac{a(a+4)}{8}}\quad{\rm \cite{se77}}
\end{align*} 
are found, where $\fl{x}$ denotes the integer part of a real $x$.  
We can give the correspondence summations. 

\begin{Cor}
\begin{align*} 
&s(a,a+1,a+2,a+4)\\
&=\begin{cases}
\frac{1}{96}(a^4+8 a^3+26 a^2+16 a)&\text{if $a\equiv 0\pmod 4$};\\ 
\frac{1}{96}(a^4+8 a^3+11 a^2-38 a+18)&\text{if $a\equiv 1\pmod 4$};\\ 
\frac{1}{96}(a^4+8 a^3+14 a^2-32 a+24)&\text{if $a\equiv 2\pmod 4$};\\ 
\frac{1}{96}(a^4+8 a^3+11 a^2-50 a+42)&\text{if $a\equiv 3\pmod 4$}\,.
\end{cases}
\end{align*}
\label{cor:a124}
\end{Cor}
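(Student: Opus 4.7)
The plan is to specialize Theorem~\ref{th-aa} to $d=1$, $k=3$, $K=4$, and then split on $a\bmod 4$. The relation $K-1=q(k-1)+r$ with $0\le r<2$ forces $q=r=1$, so the common quantities $S_1$ and $S_2$ become fixed polynomials in $a$ of degrees $1$ and $2$, identical across all four residue classes.

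The decomposition $a=4\alpha+\beta$ with $0\le\beta<4$ then yields the four sub-cases. When $a\equiv 0\pmod 4$ we have $\beta=0$, so $T_1=T_2=0$ and the master formula of Theorem~\ref{th-aa} reduces accordingly. In the remaining three cases the secondary decomposition $\beta-1=2\gamma+\delta$ with $0\le\delta<2$ gives $(\beta,\gamma,\delta)=(1,0,0)$, $(2,0,1)$, $(3,1,0)$ respectively; for each we compute $T_1$ and $T_3$ by direct substitution into the closed-form expressions in Theorem~\ref{th-aa}, then assemble $T_2=\beta(\alpha a_{k+1})^2+2\alpha a_{k+1}(T_1-\alpha\beta a_{k+1})+T_3$, using $a_{k+1}=a+4$ and $\alpha=(a-\beta)/4$ to rewrite everything as a polynomial in $a$ alone.

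Finally, we insert $S_1$, $S_2$, $T_1$, $T_2$, and $\alpha=(a-\beta)/4$ into the master formula, simplify within each residue class, and collect terms. In each case the various $1/a$ and $1/(2a)$ contributions combine to produce numerators exactly divisible by $a$, leaving a quartic in $a$ divided by $96$ that matches the claimed formula. The only real difficulty is the bookkeeping; every step is routine polynomial algebra in a single variable of degree at most four, and one can sanity-check the final expressions at small specializations (for example $a=4$ gives $s(4,5,6,8)=1+2+3+7=13$ and $a=5$ gives $s(5,6,7,9)=1+2+3+4+8=18$, both consistent with the stated $a\equiv 0$ and $a\equiv 1$ formulas).
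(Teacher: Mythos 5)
Your proposal is correct and follows essentially the same route as the paper: the paper's proof likewise fixes $d=1$, $k=3$, $K=4$, deduces $q=r=1$ and $\alpha=\fl{a/4}$, reads off $(\beta,\gamma,\delta)=(0,-,-),(1,0,0),(2,0,1),(3,1,0)$ for the four residue classes, and invokes Theorem~\ref{th-aa}. Your added numerical sanity checks at $a=4$ and $a=5$ are consistent with the stated formulas.
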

\begin{proof} 
Here, $d=1$, $k=3$, $K=4$, $q=r=1$ and $\alpha=\fl{a/4}$. 
When $a\equiv 0\pmod 4$, $\beta=0$.     
When $a\equiv 1\pmod 4$, $\beta=1$, $\gamma=\delta=0$.  
When $a\equiv 2\pmod 4$, $\beta=2$, $\gamma=0$ and $\delta=1$.  
When $a\equiv 3\pmod 4$, $\beta=3$, $\gamma=1$ and $\delta=0$. 
The results follow from Theorem \ref{th-aa}. 
\end{proof} 

In \cite{dm64}, some more special cases are found:  
\begin{align*}  
&g(a,a+1,a+2,a+5)\\
&=a\fl{\frac{a+1}{5}}+\fl{\frac{a}{5}}+\fl{\frac{a+1}{5}}+\fl{\frac{a+2}{5}}+2\fl{\frac{a+3}{5}}-1\,,\\ 
&g(a,a+1,a+2,a+6)\\
&=a\fl{\frac{a}{6}}+2\fl{\frac{a}{6}}+2\fl{\frac{a+1}{6}}+5\fl{\frac{a+2}{6}}+\fl{\frac{a+3}{6}}+\fl{\frac{a+4}{6}}+\fl{\frac{a+5}{6}}-1\,. 
\end{align*}  
We can similarly derive the correspondence result by Theorem \ref{th-aa}. 

\begin{Cor}
\begin{align*} 
&s(a,a+1,a+2,a+5)\\
&=\begin{cases}
\frac{1}{150}(a^4+13 a^3+65 a^2+35 a)&\text{if $a\equiv 0\pmod 5$};\\ 
\frac{1}{150}(a^4+13 a^3+41 a^2-85 a+30)&\text{if $a\equiv 1\pmod 5$};\\ 
\frac{1}{150}(a^4+13 a^3+41 a^2-97 a+60)&\text{if $a\equiv 2\pmod 5$};\\ 
\frac{1}{150}(a^4+13 a^3+35 a^2-139 a+120)&\text{if $a\equiv 3\pmod 5$};\\ 
\frac{1}{150}(a^4+13 a^3+53 a^2-19 a+90)&\text{if $a\equiv 4\pmod 5$}\,.
\end{cases}
\end{align*}
\label{cor:a125}
\end{Cor}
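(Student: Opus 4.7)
The strategy is to specialize Theorem \ref{th-aa} to the parameters $d=1$, $k=3$, $K=5$, so that $a_1=a$, $a_2=a+1$, $a_3=a+2$, and $a_{k+1}=a+5$. The relation $K-1=q(k-1)+r$ with $0\le r<k-1$ becomes $4=2q+r$, which forces $q=2$ and $r=0$. Substituting these into the general expressions for $S_1$ and $S_2$ in Theorem \ref{th-aa} gives concrete quadratic polynomials in $a$ (with coefficients depending only on $K=5$), and these are the only parts of the formula that remain fixed across all residue classes.

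Next, I would resolve the decomposition $a=\alpha K+\beta=5\alpha+\beta$ with $0\le\beta<5$, which naturally produces the five residue classes $a\pmod 5$. In each class, $\alpha=(a-\beta)/5$, and the secondary parameters $\gamma,\delta$ are determined from $\beta-1=\gamma(k-1)+\delta=2\gamma+\delta$ with $0\le\delta<2$. Explicitly: $\beta=0$ trivially gives $T_1=T_2=0$; $\beta=1$ gives $(\gamma,\delta)=(0,0)$; $\beta=2$ gives $(\gamma,\delta)=(0,1)$; $\beta=3$ gives $(\gamma,\delta)=(1,0)$; and $\beta=4$ gives $(\gamma,\delta)=(1,1)$. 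With these in hand, the formulas for $T_1$, $T_3$, and then $T_2=\beta(\alpha a_{k+1})^2+2\alpha a_{k+1}(T_1-\alpha\beta a_{k+1})+T_3$ become explicit polynomials in $a$ whose degree is at most $2$ (since $\alpha$ is linear in $a$ and $a_{k+1}=a+5$ is linear in $a$).

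Then the plan is simply to feed $S_1,S_2,T_1,T_2$ into the master formula
\[
s(a,a+1,a+2,a+5)=\frac{\alpha(\alpha-1)(a+5)\cdot 5\bigl(2(\alpha-2)a+(2\alpha-1)\cdot 5\bigr)}{12a}+\frac{\alpha\bigl((\alpha-2)a+(\alpha-1)\cdot 5\bigr)}{2a}S_1+\frac{\alpha S_2}{2a}-\frac{T_1}{2}+\frac{T_2}{2a}+\frac{a^2-1}{12}
\]
of Theorem \ref{th-aa}, and simplify. In each of the five residue classes, $\alpha$ is linear in $a$ with an integer numerator, so after multiplying through and collecting, the $1/a$ denominators cancel exactly (this cancellation is forced by the integrality of the Sylvester sum) and one is left with a quartic polynomial in $a$ over $150$. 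Matching each constant term against the claimed four-case table completes the proof.

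The conceptual step is already done by Theorem \ref{th-aa}; the main obstacle here is purely computational bookkeeping, namely carrying out the five substitutions without arithmetic slips and verifying that the $1/a$ terms cancel in each class. A sensible sanity check along the way is to plug in the smallest admissible value of $a$ in each residue class (for instance $a=5,6,7,8,9$) and confirm the polynomial output against the direct enumeration of nonrepresentable integers for $(a,a+1,a+2,a+5)$; this catches any sign or coefficient error before it propagates through all five cases.
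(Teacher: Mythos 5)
Your proposal is correct and matches the paper's own route: the paper proves Corollary \ref{cor:a124} by specializing Theorem \ref{th-aa} with $d=1$, $k=3$, $K=4$ and a case split on $\beta=a\bmod K$, and states that Corollaries \ref{cor:a125} and \ref{cor:a126} follow ``similarly''; your parameters $d=1$, $k=3$, $K=5$, $q=2$, $r=0$, $\alpha=\fl{a/5}$ and the five $(\beta,\gamma,\delta)$ assignments are exactly the right specialization. (Only a trivial slip: the table has five cases, not four.)
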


\begin{Cor}
\begin{align*} 
&s(a,a+1,a+2,a+6)\\
&=\begin{cases}
\frac{1}{216}(a^4+21 a^3+189 a^2+126 a)&\text{if $a\equiv 0\pmod 6$};\\ 
\frac{1}{216}(a^4+21 a^3+150 a^2-169 a-3)&\text{if $a\equiv 1\pmod 6$};\\ 
\frac{1}{216}(a^4+21 a^3+141 a^2-290 a+48)&\text{if $a\equiv 2\pmod 6$};\\ 
\frac{1}{216}(a^4+21 a^3+126 a^2-441 a+189)&\text{if $a\equiv 3\pmod 6$};\\ 
\frac{1}{216}(a^4+21 a^3+141 a^2-322 a+240)&\text{if $a\equiv 4\pmod 6$};\\ 
\frac{1}{216}(a^4+21 a^3+150 a^2-281 a+237)&\text{if $a\equiv 5\pmod 6$}\,.
\end{cases}
\end{align*}
\label{cor:a126}
\end{Cor}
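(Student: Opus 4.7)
The plan is to specialize Theorem \ref{th-aa} to $d=1$, $k=3$, $K=6$. With these choices $K-1=5=q(k-1)+r=2q+r$ and $0\le r<2$ force $q=2$ and $r=1$, so the expressions $S_1$ and $S_2$ defined in Theorem \ref{th-aa} become concrete polynomials in $a$ that do not depend on the residue of $a$ modulo $6$. Writing $a=6\alpha+\beta$ with $0\le\beta<6$, the integer $\alpha=\fl{a/6}$ is determined, and for $\beta>0$ the auxiliary integers $\gamma$, $\delta$ are read off from $\beta-1=2\gamma+\delta$ with $0\le\delta<2$.

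The six residue classes modulo $6$ then split according to the table $\beta=1\mapsto(\gamma,\delta)=(0,0)$, $\beta=2\mapsto(0,1)$, $\beta=3\mapsto(1,0)$, $\beta=4\mapsto(1,1)$, $\beta=5\mapsto(2,0)$, while $T_1=T_2=0$ when $\beta=0$. In each case I would substitute the corresponding triple $(\beta,\gamma,\delta)$ into the closed forms for $T_1$ and $T_3$ given in Theorem \ref{th-aa}, assemble
\[
T_2=\beta(\alpha a_{k+1})^2+2\alpha a_{k+1}(T_1-\alpha\beta a_{k+1})+T_3,
\]
and then feed $S_1$, $S_2$, $T_1$, $T_2$ into the main formula of Theorem \ref{th-aa}. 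Eliminating $\alpha$ throughout via $\alpha=(a-\beta)/6$ reduces each case to a polynomial identity in $a$ alone, which I expect to collapse to the quartic displayed for that residue class.

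The main obstacle is bookkeeping rather than any conceptual difficulty: each of the six cases produces a quartic in $a$ only after substantial cancellation, since the $T_2$ contribution — being quadratic in $\alpha$ — generates apparent $a^3$ and $a^4$ terms that must cancel cleanly against the leading $\alpha(\alpha-1)(a+6d)K(2(\alpha-2)a+(2\alpha-1)K d)/(12a)$ term and against $\alpha S_2/(2a)$. A short symbolic computation for each of the six residues is the most reliable route to confirming the stated polynomials; no new ideas are required beyond those already encoded in Theorem \ref{th-aa}, exactly as in the proof of Corollary \ref{cor:a124}.
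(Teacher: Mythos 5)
Your proposal is correct and follows exactly the route the paper intends: specializing Theorem \ref{th-aa} with $d=1$, $k=3$, $K=6$ (hence $q=2$, $r=1$, $\alpha=\fl{a/6}$) and running through the six residues $\beta=0,\dots,5$ with the correct $(\gamma,\delta)$ read off from $\beta-1=2\gamma+\delta$, which is precisely the "similarly derive by Theorem \ref{th-aa}" argument the paper gives, modeled on its proof of Corollary \ref{cor:a124}. No gaps; the remaining work is the routine polynomial simplification you describe.
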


\subsection{Examples}  

Consider the sequence $12,17,22,27,42$. Then, $a=12$, $d=5$ $k=4$, $K=6$, $q=1$, $r=2$, $\alpha=2$ and $\beta=0$. By Theorem \ref{th-aa}, we have $s(12,17,22,27,42)=1211$. In fact, the sum of nonrepresentable numbers is 
\begin{align*} 
&1+2+3+4+5+6+7+8+9+10+11+13+14+15+16+18\\
&+19+20+21+23+25+26+28+30+31+32+33+35+37\\
&+38+40+43+45+47+50+52+55+57+62+67+74+79\\
&=1211\,. 
\end{align*} 

Consider the sequence $14,17,20,23,38$. Then, $a=14$, $d=3$ $k=4$, $K=8$, $q=2$, $r=1$, $\alpha=1$, $\beta=6$, $\gamma=1$ and $\delta=2$. By Theorem \ref{th-aa}, we have $s(14,17,20,23,38)=953$. In fact, the sum of nonrepresentable numbers is 
\begin{align*} 
&1 + 2 + 3 + 4 + 5 + 6 + 7 + 8 + 9 + 10 + 11 + 12 + 13 + 15 + 16 + 18 \\
&+ 19 + 21 + 22 + 24 + 25 + 26 + 27 + 29 + 30 + 32 + 33 + 35 + 36 + 39 \\
&+ 41 + 44 + 47 + 49 + 50 + 53 + 64 + 67\\
&=953\,. 
\end{align*} 

By Corollary \ref{cor:a124}, when $a=8,9,10,11$, we have 
\begin{align*} 
s(8,9,10,12)&=104\,,\\
s(9,10,11,13)&=135\,,\\
s(10,11,12,14)&=199\,,\\
s(11,12,13,15)&=272\,. 
\end{align*} 
Indeed, the sums of nonrepresentable numbers are 
\begin{align*} 
&1 + 2 + 3 + 4 + 5 + 6 + 7 + 11 + 13 + 14 + 15 + 23=104\,,\\
&1 + 2 + 3 + 4 + 5 + 6 + 7 + 8 + 12 + 14 + 15 + 16 + 17 + 25=135\,,\\
&1 + 2 + 3 + 4 + 5 + 6 + 7 + 8 + 9 + 13 + 15 + 16 + 17 + 18 + 19\\
&\quad + 27 + 29=199\,,\\
&1 + 2 + 3 + 4 + 5 + 6 + 7 + 8 + 9 + 10 + 14 + 16 + 17 + 18 + 19\\ 
&\quad + 20 + 21 + 29 + 31 + 32=272\,,
\end{align*} 
respectively.

\section{Geometric-like sequence}  

Consider the case 
$$
a_1=a,\, a_2=a+1,\, a_3=a+2, \, a_4=a+2^2,\,\dots,\, a_{k+2}=a+2^k\quad(k\ge 2)\,.
$$ 
Put $a=2^k q+r$ with $0\le r<2^k$. The first line is the sequence $1,2,\dots,2^k-1\pmod a$, that is 
\begin{multline}
a_2=a+1,\, a_3=a+2,\, a_2+a_3=2 a+3, \, a_4=a+4,\,\dots,\\
a_2+a_3+\cdots+a_{k+1}=k a+2^k-1\,.
\label{geo-1st} 
\end{multline} 
The second line is the sequence 
$$
a_{k+2},\,a_{k+2}+a_2,\, a_{k+2}+a_3,\, a_{k+2}+a_2+a_3, \, a_{k+2}+a_4,\,\dots,\,a_{k+2}+a_2+a_3+\cdots+a_{k+1}\,. 
$$
Similarly, the $q$-th line is the sequence 
\begin{multline*}
(q-1)a_{k+2},\,(q-1)a_{k+2}+a_2,\, (q-1)a_{k+2}+a_3,\, (q-1)a_{k+2}+a_2+a_3, \\ 
(q-1)a_{k+2}+a_4,\,\dots,\,(q-1)a_{k+2}+a_2+a_3+\cdots+a_{k+1}\,. 
\end{multline*} 
If $r=0$, then the line is finished. If $r>0$, then the ($q+1$)th line consists from $r$ terms beginning from 
$$
q a_{k+2},\, q a_{k+2}+a_2,\, q a_{k+2}+a_3,\, \cdots\,. 
$$ 

In order to find the sum of the elements, 
consider the exact term which is congruent to $n$ modulo $a$ ($1\le n\le 2^k-1$) in the sequence (\ref{geo-1st}). If $s_1(n)$ is the exponent of $2$ in the canonical representation of $n!$ and $s_2(n)$ is the number of ones in the binary representation of $n$, then 
\begin{align*}  
s_2(n)&=n-s_1(n)\\
&=n-\sum_{i=1}^\infty\fl{\frac{n}{2^i}}=n-\sum_{i=1}^{\fl{\log_2 n}}\fl{\frac{n}{2^i}}
\end{align*}
(see, e.g., \cite[Theorem 3.16]{tat05}).  Hence, the exact expression of the $2^k-1$ terms in the first line is given by 
\begin{multline*} 
a+1=s_2(1)a+1,\,a+2=s_2(2)a+2,\,2 a+3=s_2(3)a+3,\,\dots,\\  
s_2(j)a+j,\,\dots,\,s_2(2^k-1)a+2^k-1\,. 
\end{multline*} 
Since 
\begin{align*}  
&\fl{\frac{2^i}{2^i}}=\cdots=\fl{\frac{2\cdot 2^i-1}{2^i}}=1,\quad 
\fl{\frac{2\cdot 2^i}{2^i}}=\cdots=\fl{\frac{3\cdot 2^i-1}{2^i}}=2\,,\\
&\fl{\frac{3\cdot 2^i}{2^i}}=\cdots=\fl{\frac{4\cdot 2^i-1}{2^i}}=3,\quad \cdots, \\
&\fl{\frac{(2^{k-i}-1)\cdot 2^i}{2^i}}=\cdots=\fl{\frac{2^k-1}{2^i}}=2^{k-i}\,, 
\end{align*}
the sum of all the elements in the first line is equal to 
\begin{align*} 
&a_2+a_3+(a_2+a_3)+\cdots+(a_2+a_3+\cdots+a_{k+1})\\
&=\left(\sum_{j=1}^{2^k-1}s_2(j)\right)a+\sum_{j=1}^{2^k-1}j\\
&=\left(\sum_{j=1}^{2^k-1}j-\sum_{j=1}^{2^k-1}\sum_{i=1}^{\fl{\log_2 j}}\fl{\frac{j}{2^i}}\right)a+2^{k-1}(2^k-1)\\
&=\left(2^{k-1}(2^k-1)-\sum_{i=1}^k\sum_{j=2^i}^{2^k-1}\fl{\frac{j}{2^i}}\right)a+2^{k-1}(2^k-1)\\
&=\left(2^{k-1}(2^k-1)-\sum_{i=1}^k 2^i\bigl(1+2+3+\cdots+(2^{k-i}-1)\bigr)\right)a+2^{k-1}(2^k-1)\\
&=\left(2^{k-1}(2^k-1)-2^{k-1}\sum_{i=1}^k(2^{k-i}-1)\bigr)\right)a+2^{k-1}(2^k-1)\\
&=\left(2^{k-1}(2^k-1)-2^{k-1}\bigl(2^k-k-1)\bigr)\right)a+2^{k-1}(2^k-1)\\ 
&=2^{k-1}k a+2^{k-1}(2^k-1)\,. 
\end{align*}
In general, the sum of all the elements in the $j$-th line ($1\le j\le q$) is  
\begin{align*}  
&(j-1)a_{k+2}2^k+2^{k-1}k a+2^{k-1}(2^k-1)\\
&=(j-1)(a+2^k)2^k+2^{k-1}k a+2^{k-1}(2^k-1)\,. 
\end{align*}
Hence, if $r=0$, then 
\begin{align*}  
\sum_{i=1}^{a-1}m_i&=\sum_{j=1}^q(j-1)(a+2^k)2^k+q 2^{k-1}k a+q 2^{k-1}(2^k-1)\\
&=q(q-1)(a+2^k)2^{k-1}+q 2^{k-1}k a+q 2^{k-1}(2^k-1)\\
&=2^{k-1}q\bigl((q+k-1)a+(2^k q-1)\bigr)\,. 
\end{align*}
Next, by 
\begin{align*} 
\sum_{j=1}^{2^k-1}j s_1(j)&=2^{k-1}(2^k-1)(2^{k+2}-3 k-5)\,,\\
\sum_{j=1}^{2^k-1}\bigl(s_1(j)\bigr)^2&=\frac{2^{k-2}}{3}\bigl(2^{k+1}(2^{k+1}-3 k-6)+(3 k^2+9 k+8)\bigr)\,, 
\end{align*}
the sum of all the square of elements in the first line is  
\begin{align*}  
&a_2^2+a_3^2+(a_2+a_3)^2+\cdots+(a_2+a_3+\cdots+a_{k+1})^2\\
&=2^{k-1}\left(\frac{k(k+1)}{2}a^2+(2^k-1)(k+1)a+\frac{(2^k-1)(2^{k+1}-1)}{3}\right)\,. 
\end{align*} 
The sum of all the square of elements in the $j$-th line ($1\le j\le q$) is  
\begin{align*}  
&\bigl((j-1)a_{k+2}+a_2\bigr)^2+\bigl((j-1)a_{k+2}+a_3\bigr)^2+\cdots+\bigl((j-1)a_{k+2}+a_2+a_3\bigr)^2\\
&\qquad +\cdots+\bigl((j-1)a_{k+2}+a_2+a_3+\cdots+a_{k+1}\bigr)^2\\
&=(j-1)^2 2^k(a+2^k)^2+2(j-1)(a+2^k)\times(\text{sum of the first line})\\
&\qquad +(\text{sum of square of the first line})\\
&=(j-1)^2 2^k(a+2^k)^2\\
&\quad +2(j-1)(a+2^k)\bigl(2^{k-1}k a+2^{k-1}(2^k-1)\bigr)\\
&\quad +2^{k-1}\left(\frac{k(k+1)}{2}a^2+(2^k-1)(k+1)a+\frac{(2^k-1)(2^{k+1}-1)}{3}\right)\,. 
\end{align*} 
Hence, if $r=0$, then 
\begin{align*}  
\sum_{i=1}^{a-1}m_i^2
&=\sum_{j=1}^q(j-1)^2 2^k(a+2^k)^2\\
&\quad +\sum_{j=1}^q 2(j-1)(a+2^k)\bigl(2^{k-1}k a+2^{k-1}(2^k-1)\bigr)\\
&\quad +q 2^{k-1}\left(\frac{k(k+1)}{2}a^2+(2^k-1)(k+1)a+\frac{(2^k-1)(2^{k+1}-1)}{3}\right)\\
&=\frac{q(q-1)(2 q-1)2^{k-1}(a+2^k)^2}{3}\\
&\quad +q(q-1)(a+2^k)\bigl(2^{k-1}k a+2^{k-1}(2^k-1)\bigr)\\
&\quad +q 2^{k-1}\left(\frac{k(k+1)}{2}a^2+(2^k-1)(k+1)a+\frac{(2^k-1)(2^{k+1}-1)}{3}\right)\\
&=\frac{2^{k-2}q}{3}\biggl(
\bigl(3 k(2 q+k-1)+2(q-1)(2 q-1)\bigr)a^2\\ 
&\qquad +2\bigl(2^k(4 q^2+3 k q-3 q+2)-3(q+k)\bigr)a\\
&\qquad +2(2^{2 k+1}q^2-3\cdot 2^k q+1)
\biggr)\,. 
\end{align*}

Next, assume that $r>0$. The exact expression of the $r$ terms in the ($q+1$)-th line is given by 
\begin{multline*} 
q a_{k+2},\,q a_{k+2}+(a+1),\,q a_{k+2}+(a+2),\,q a_{k+2}+(2 a+3),\,\dots\\ 
q a_{k+2}+\bigl(s_2(j)a+j\bigr),\,\dots,\,q a_{k+2}+\bigl(s_2(r-1)a+r-1\bigr)\,.
\end{multline*} 
Thus, by $a-r=2^k q$, the sum of additional terms from the ($q+1$)-th line is 
\begin{align*}  
\mathfrak T_1&:=r q a_{k+2}+\left(\sum_{j=1}^{r-1}s_2(j)\right)a+\sum_{j=1}^{r-1}j\\
&=r q(a+2^k)+\left(\frac{r(r-1)}{2}-\sum_{j=1}^{r-1}s_1(j)\right)a+\frac{r(r-1)}{2}\\
&=\left(\frac{r(r+2 q+1)}{2}-\sum_{j=1}^{r-1}s_1(j)\right)a-\frac{r(r+1)}{2}\,. 
\end{align*}   
The sum of square of additional terms from the ($q+1$)-th line is 
\begin{align*}  
\mathfrak T_2&:=r q^2 a_{k+2}^2+2 q a_{k+2}\sum_{j=1}^{r-1}\bigl(s_2(j)a+j\bigr) +\sum_{j=1}^{r-1}\left(\bigl(s_2(j)\bigr)^2 a^2+2 j s_2(j)a+j^2\right)\\ 
&=r q^2(a+2^k)^2+2 q(a+2^k)\left(\sum_{j=1}^{r-1}s_2(j)\right)a+q(a+2^k)r(r-1)\\
&\quad +\sum_{j=1}^{r-1}\left(j^2-2 j s_1(j)+\bigl(s_1(j)\bigr)^2\right)a^2\\
&\quad +2\left(\sum_{j=1}^{r-1}j \bigl(j-s_1(j)\bigr)\right)a+\frac{r(r-1)(2 r-1)}{6}\\ 
&=r\bigl((q+1)a-r\bigr)^2+2\bigl((q+1)a-r\bigr)\left(\frac{r(r-1)}{2}-\sum_{j=1}^{r-1}s_1(j)\right)a\\
&\quad +\bigl((q+1)a-r\bigr)r(r-1)\\
&\quad +\left(\frac{r(r-1)(2 r-1)}{6}-2\sum_{j=1}^{r-1}j s_1(j)+\sum_{j=1}^{r-1}\bigl(s_1(j)\bigr)^2\right)a^2\\ 
&\quad +2\left(\frac{r(r-1)(2 r-1)}{6}-\sum_{j=1}^{r-1}j s_1(j)\right)a+\frac{r(r-1)(2 r-1)}{6}\\
&=\biggl(\sum_{j=1}^{r-1}\bigl((s_1(j))^2-2 j s_1(j)-2(q+1)s_1(j)\bigr)\\ 
&\qquad +\frac{r\bigl(6q(q+r+1)+(r+1)(2 r+1)\bigr)}{6}\biggr)a^2\\
&\quad+\biggl(2\sum_{j=1}^{r-1}(r-j)s_1(j)-\frac{r(r+1)(3 q+r+2)}{3}\biggr)a 
 +\frac{r(r+1)(2 r+1)}{6}\,.  
\end{align*}

By using the third formula in Lemma \ref{lem1}, we get 
\begin{align*}
&s(a,a+1,a+2,a+2^2,\dots,a+2^k)\\
&=\frac{1}{2 a}\biggl(\frac{2^{k-2}q}{3}\biggl(
\bigl(3 k(2 q+k-1)+2(q-1)(2 q-1)\bigr)a^2\\ 
&\qquad +2\bigl(2^k(4 q^2+3 k q-3 q+2)-3(q+k)\bigr)a\\
&\qquad +2(2^{2 k+1}q^2-3\cdot 2^k q+1)+\mathfrak T_2\biggr)\\
&\quad-\frac{1}{2}\biggl(2^{k-1}q\bigl((q+k-1)a-(2^k q-1)\bigr)+\mathfrak T_1\biggr)
+\frac{a^2-1}{12}\\
&=\frac{2^{k-3}q}{3}\bigl(4(q-1)(q-2)+3 k(2 q+k-3)\bigr)a\\
&\quad+\frac{1}{12}\bigl(2^{2 k}q(4 q^2+3 k q-3 q+2)-3\cdot 2^k q(q+k-2)-6\bigr)\\ 
&\quad+\frac{1}{a}(2^{2 k+1}q^2-3\cdot 2^k q+1)-\frac{\mathfrak T_1}{2}+\frac{\mathfrak T_2}{2 a}+\frac{a^2-1}{12}\,. 
\end{align*}

\begin{theorem} 
For integers $k\ge 2$, $q$ and $r$, satisfying $a=2^k q+r$ with $0\le r<2^k$, 
\begin{align*}
&s(a,a+1,a+2,a+2^2,\dots,a+2^k)\\
&=\frac{2^{k-3}q}{3}\bigl(4(q-1)(q-2)+3 k(2 q+k-3)\bigr)a\\
&\quad+\frac{1}{12}\bigl(2^{2 k}q(4 q^2+3 k q-3 q+2)-3\cdot 2^k q(q+k-2)-6\bigr)\\ 
&\quad+\frac{1}{a}(2^{2 k+1}q^2-3\cdot 2^k q+1)-\frac{\mathfrak T_1}{2}+\frac{\mathfrak T_2}{2 a}+\frac{a^2-1}{12}\,, 
\end{align*}
where 
$$  
\mathfrak T_1=\left(\frac{r(r+2 q+1)}{2}-\sum_{j=1}^{r-1}s_1(j)\right)a-\frac{r(r+1)}{2}
$$ 
and 
\begin{align*}  
\mathfrak T_2&=\biggl(\sum_{j=1}^{r-1}\bigl((s_1(j))^2-2 j s_1(j)-2(q+1)s_1(j)\bigr)\\ 
&\qquad +\frac{r\bigl(6q(q+r+1)+(r+1)(2 r+1)\bigr)}{6}\biggr)a^2\\
&\quad+\biggl(2\sum_{j=1}^{r-1}(r-j)s_1(j)-\frac{r(r+1)(3 q+r+2)}{3}\biggr)a 
 +\frac{r(r+1)(2 r+1)}{6}\,.  
\end{align*}   
\label{th:geom} 
\end{theorem}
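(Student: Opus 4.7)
The plan is to apply the third formula in Lemma \ref{lem1}, so the task reduces to computing $\sum_{i=1}^{a-1} m_i$ and $\sum_{i=1}^{a-1} m_i^2$ for this specific choice of generators. The main new ingredient here, compared with the arithmetic progression cases, is that the generators include $2^0,2^1,2^2,\dots,2^k$ as offsets, so the minimal residue system modulo $a$ will naturally organize itself according to binary expansions of integers.

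First I would construct the minimal residue system. The key observation is that for each $n$ with $1\le n\le 2^k-1$, the representative of the class of $n\pmod{a}$ in the first ``block'' is obtained by adding the $a_j$'s whose indices correspond to the binary digits of $n$: if $n=\sum_{i}2^{e_i}$ is its binary expansion, then the representative is $\sum_{i}(a+2^{e_i})=s_2(n)\,a+n$, where $s_2(n)$ counts the ones in the binary representation of $n$. Using the identity $s_2(n)=n-s_1(n)$ with $s_1(n)=\sum_{i\ge 1}\fl{n/2^i}$ (Legendre's formula for the $2$-adic valuation of $n!$), the block sum and block square-sum reduce to manipulations of elementary sums and telescoping Legendre-type sums, exactly as carried out in the narrative preceding the theorem.

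Next I would assemble successive blocks by adding multiples of $a_{k+2}=a+2^k$, since $a_{k+2}\equiv 2^k\pmod a$ shifts each residue class by $2^k$. Writing $a=2^k q+r$ with $0\le r<2^k$, the full residue system partitions into $q$ complete blocks (the $j$-th block obtained by adding $(j-1)a_{k+2}$ to every element of the first block), plus, when $r>0$, a partial block of $r-1$ further terms indexed by $j=1,\dots,r-1$. Summing over the complete blocks reduces to the arithmetic and quadratic sums $\sum_{j=1}^{q}(j-1)$ and $\sum_{j=1}^{q}(j-1)^2$ multiplied against the precomputed per-block totals; the contribution of the partial block, denoted $\mathfrak T_1$ and $\mathfrak T_2$, involves $s_1(j)$-sums truncated at $r-1$, which do not admit further closed form and therefore appear as residual terms in the final statement.

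The main obstacle is bookkeeping rather than a conceptual leap: one must evaluate $\sum_{j=1}^{2^k-1}s_2(j)$, $\sum_{j=1}^{2^k-1}j\,s_1(j)$, and $\sum_{j=1}^{2^k-1}\bigl(s_1(j)\bigr)^2$ in closed form, and these evaluations rely on grouping the inner floor sum $\sum_{j}\fl{j/2^i}$ by powers of $2$ as displayed in the text. Once $\sum m_i$ and $\sum m_i^2$ are in hand, substitution into the third formula of Lemma \ref{lem1}, combined with the identity $a=2^k q+r$ to simplify the coefficients of $a$, yields the stated expression; the terms $\mathfrak T_1,\mathfrak T_2$ vanish precisely when $r=0$, which is the clean case.
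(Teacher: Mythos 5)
Your proposal follows the paper's own proof essentially step for step: the same binary-expansion construction of the minimal residue system (representative $s_2(n)a+n$ for $1\le n\le 2^k-1$), the same identity $s_2(n)=n-s_1(n)$ with the Legendre-type floor sums, the same assembly of blocks by adding multiples of $a_{k+2}=a+2^k$, and the same final substitution into the third formula of Lemma \ref{lem1}. One bookkeeping slip should be fixed before you execute the plan: your partition of the $a-1$ residues undercounts by exactly $q$. Each line after the first consists of $2^k$ elements, namely the bare multiple $(j-1)a_{k+2}$ itself \emph{together with} $(j-1)a_{k+2}$ added to each of the $2^k-1$ elements of the first line; likewise the partial block has $r$ elements beginning with $q a_{k+2}$ alone (indexed by $j=0,1,\dots,r-1$), not $r-1$. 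With your counts ($q$ shifted copies of the first block plus $r-1$ extra terms) you would account for only $q(2^k-1)+r-1=a-1-q$ residues, omitting precisely the classes represented by $a_{k+2},2a_{k+2},\dots,qa_{k+2}$. This is not cosmetic: the per-line sum used in the paper is $(j-1)a_{k+2}\cdot 2^k$ plus the first-line sum (note the factor $2^k$, not $2^k-1$), and $\mathfrak T_1$ as stated contains the term $rqa_{k+2}$ arising from the $j=0$ entry of the partial line. Once the element counts are corrected, the computation goes through exactly as in the paper.
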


\subsection{Example}  

Consider the sequence $16,17,18,20,24$. Then, $a=16$, $k=3$ $q=2$ and $r=0$. By Theorem \ref{th-aa}, we have $s(16,17,18,20,24)=684$. In fact, the sum of nonrepresentable numbers is 
\begin{align*} 
&1+2+3+4+5+6+7+8+9+10+11+12+13+14+15+19\\
&+21+22+23+25+26+27+28+29+30+31+39+43+45\\
&+46+47+63\\
&=684\,. 
\end{align*} 

Consider the sequence $25,26,27,29,33$. Then, $a=25$, $k=3$ $q=3$ and $r=1$. By Theorem \ref{th-aa}, we have $s(25,26,27,29,33)=2557$. If the sequence is $25,26,27,29,33,41$, then $a=25$, $k=4$ $q=1$ and $r=9$. By Theorem \ref{th-aa}, we have $s(25,26,27,29,33,41)=1827$.

\section{Conclusion} 

The classical purpose of the Frobenius problem is to find the largest integer (Frobenius number) which is not represented in terms of $a_1,a_2,\dots,a_k$. Then, as related concepts, the number of nonrepresentable positive integers (Sylvester number) and the sum of nonrepresentable positive integers (Sylvester sum) have been also studied. Recently, as an extended concept, the weighted sum is introduced. When $k\ge 3$, it is very difficult to find any explicit expression of these numbers and/or sums. However, when $a_1,a_2,\dots,a_k$ forms some special sequences or patterns, it may be possible to find any expressions. In this paper, we give explicit expressions of the Sylvester sum ($\lambda=1$) and the weighed sum ($\lambda\ne 1$), where $a_1,a_2,\dots,a_k$ forms arithmetic progressions. As applications, various other cases are also considered, including weighted sums, almost arithmetic sequences, arithmetic sequences with an additional term, and geometric-like sequences. Several examples illustrate and confirm our results.

\end{document}